\theoremstyle{plain}
\newtheorem{axiom}{Axiom}
\newtheorem{claim}[axiom]{Claim}
\newtheorem{theorem}{Theorem}[section]
\newtheorem{lemma}[theorem]{Lemma}
\newtheorem{corollary}[theorem]{Corollary}
\theoremstyle{remark}
\newtheorem{definition}[theorem]{Definition}
\tikzset{
    double color fill/.code 2 args={
        \pgfdeclareverticalshading[%
            tikz@axis@top,tikz@axis@middle,tikz@axis@bottom%
        ]{diagonalfill}{100bp}{%
            color(0bp)=(tikz@axis@bottom);
            color(50bp)=(tikz@axis@bottom);
            color(50bp)=(tikz@axis@middle);
            color(50bp)=(tikz@axis@top);
            color(100bp)=(tikz@axis@top)
        }
        \tikzset{shade, left color=#1, right color=#2, shading=diagonalfill}
    }
}
\newcommand{\mdjackov}{m_{\mathrm{QGT}}}
\renewcommand\log{\ln}
\newcommand{\fB}{\mathfrak B}
\newcommand{\vX}{\vec X}
\renewcommand{\epsilon}{\eps}
\newcommand\vU{\vec U}
\newcommand\vH{\vec H}
\renewcommand{\vec}[1]{\boldsymbol{#1}}
\newcommand\SIGMA{\vec\sigma}
\newcommand\fD{\mathfrak{D}}
\newcommand\cE{\mathcal{E}}
\newcommand\cU{\mathcal{U}}
\newcommand\cN{\mathcal{N}}
\def\cE{{\mathcal E}}
\newcommand\eps{\varepsilon}
\newcommand\NN{\mathbb{N}}
\newcommand\Var{\mathrm{Var}}
\newcommand\Erw{\mathbb{E}}
\newcommand{\vecone}{\vec{1}}
\newcommand{\Bin}{{\rm Bin}}
\newcommand{\Hyp}{{\rm Hyp}}
\newcommand\bc[1]{\left({#1}\right)}
\newcommand\cbc[1]{\left\{{#1}\right\}}
\newcommand\bcfr[2]{\bc{\frac{#1}{#2}}}
\newcommand\brk[1]{\left\lbrack{#1}\right\rbrack}
\newcommand\abs[1]{\left|{#1}\right|}
\newcommand\RR{\mathbb{R}}
\newcommand\pr{\mathbb{P}} 
\renewcommand\Pr{\pr} 
\newcommand\Lem{Lemma}
\newcommand\Prop{Proposition}
\newcommand{\ceil}[1]{\left\lceil#1\right\rceil}
\def\pr{{\mathbb P}}
\newcommand{\remove}[1]{}
\DeclareMathOperator {\myOp}{SCIENT}
\newcommand{\algoname}{\ensuremath{\myOp}}
\pgfplotsset{compat=1.14}
\title[Optimal Decoding from Sparse Pooled Data]{On a Near-Optimal \& Efficient Algorithm for the Sparse Pooled Data Problem}
\author{Max Hahn-Klimroth$^1$, Remco van der Hofstad$^2$, \\ Noela Müller$^2$, Connor Riddlesden$^2$}
\address{{\tt hahnklim@mathematik.uni-frankfurt.de}, \\ {\tt \{r.w.v.d.hofstad,n.s.muller, c.d.riddlesden \}@tue.nl }}
\address{$^1$Goethe-University Frankfurt \\ $^2$Eindhoven University of Technology, Department of Mathematics and Computer Science}
\begin{document}

\begin{abstract}
 The pooled data problem asks to identify the unknown labels of a set of items from condensed measurements. More precisely, given $n$ items, assume that each item has a label in $\cbc{0,1,\ldots, d}$, encoded via the ground-truth $\SIGMA$. We call the pooled data problem sparse if the number of non-zero entries of $\SIGMA$ scales as $k \sim n^{\theta}$ for $\theta \in (0,1)$. The information that is revealed about $\SIGMA$ comes from pooled measurements, each indicating how many items of each label are contained in the pool. The most basic question is to design a pooling scheme that uses as few pools as possible, while reconstructing $\SIGMA$ with high probability. Variants of the problem and its combinatorial ramifications have been studied for at least 35 years. However, the study of the modern question of \emph{efficient} inference of the labels has suggested a statistical-to-computational gap of order $\log n$ in the minimum number of pools needed for theoretically possible versus efficient inference. In this article, we resolve the question whether this $\log n$-gap is artificial or of a fundamental nature by the design of an efficient algorithm, called \algoname, based upon a novel pooling scheme on a number of pools very close to the information-theoretic threshold. 
\end{abstract}

\maketitle

\section{Introduction}

Consider the basic problem of learning the unknown labels of a set of items.  These labels might be distinguishing features like an age group or a class label in a machine learning task. In the \emph{pooled data problem}, the information that is accessible about the labels comes from pooled measurements, where each pool reveals \emph{how many items of each label} are contained in it \cite{alaoui_2017}. The main task is then to infer all labels using as few pools as possible. The probably most famous special case of the pooled data problem is the \emph{non-adaptive quantitative group testing problem (QGT)}, where only labels $0$ and $1$ are present  \cite{djackov_1975, gebhard2021quantitative, gebrinski_2000, karimi_2019_2,Soleymani_2023}. In the QGT problem, the default interpretation of the labels is presence or absence of a virus, and the task becomes to identify the infected individuals. Other practical applications of the pooled data problem include DNA screening \cite{sham_2002}, traffic monitoring \cite{wang_2015}, machine learning \cite{liang2021neural, martins_2014} and signal recovery \cite{mazumdar2022support}.

In this article, we assume the existence of a single dominant label. In related problems such as the QGT \cite{AJS_book} or the compressed sensing problems \cite{donoho_l1}, this corresponds to the assumption of sparsity and is a distinguishing feature of the theoretical analysis of the model. This \emph{sublinear} variant finds its applications in the context of epidemiology, as early numbers of defectives can be captured in this setting according to Heaps' Law. Recently, this version of the pooled data problem has also found applications in image moderation tasks, where the goal is to detect rare but inappropriate images \cite{liang2021neural}.

Within the realm of sparse label vectors, we are interested in an \emph{average-case analysis}. This means that we assume that the true label vector is chosen uniformly at random among all vectors having a prescribed number of labels of each type, which is known to us. In practical settings, the distributional assumption of uniformity might be realised by a random permutation of the items prior to inference.

Additionally, throughout the article, we require that all measurements are conducted in parallel, which corresponds to the \emph{non-adaptive} version of the problem. This assumption is predominant in theoretical work on the quantitative group testing problem  \cite{alaoui_2017, feige2020quantitative, scarlett_2017} and of relevance in practical applications due to scalability and stability considerations \cite{NIPS2014_fb8feff2}.

Specifically, a key question for such a uniformly chosen sparse label vector subject to non-adaptive measurements is: What is the minimum number of pools that are necessary to recover \emph{all} out of $n$ labels of a uniformly chosen label vector correctly with high probability\footnote{With high probability (w.h.p.) means with probability tending to $1$ as $n \to \infty$.}? This question can be studied from an information-theoretic as well as from an algorithmic perspective.
%Here, the model exhibits \emph{phase transitions}.  
We call a sequence $m_{\mathrm{lbd}} = m_{\mathrm{lbd}}(n)$ of positive numbers an \textit{information-theoretic lower bound} if for any $\delta >0$ and any design on $m \leq (1-\delta) m_{\mathrm{lbd}}$ pools, the probability of making an error converges to one. On the other hand, a sequence $m_{\mathrm{ubd}} = m_{\mathrm{ubd}}(n)$ of positive numbers is called an \textit{information-theoretic upper bound}, if for every $\delta >0$ there exists a pool design on $m \geq (1+\delta) m_{\mathrm{ubd}}$ pools that allows exact recovery of the label vector w.h.p. (neglecting computational issues).  The information-theoretic version of the question about the minimum number of tests has been resolved completely rather recently. In particular, it has been shown that both the sublinear as well as the linear variants of the non-adaptive pooled data problem with a bounded number of labels display a sharp threshold behaviour, in the sense that the optimal information-theoretic upper and lower bounds coincide \cite{alaoui_2017,  feige2020quantitative, gebhard2021quantitative, scarlett_2017}. %Therefore, there is a strict information-theoretic phase transition \cite{gebhard2021quantitative, scarlett_2017, alaoui_2017, feige2020quantitative}. 

Having identified the precise number of pools with which inference is theoretically possible, the algorithmic question on the minimal number of pools for which the true label vector can be recovered \emph{efficiently} (meaning in polynomial time) can be addressed systematically. Such algorithmic questions are of high present interest in many high-dimensional inference problems. 
%Here, we assume that the label of each the $n$ items is an element from $\cbc{0,1,\ldots, d}$.
For sublinear the pooled data problem, our main result closes the previously existing $\ln n$-gap between the information-theoretic threshold and the best existing efficient pooling schemes. Indeed, our pooling scheme requires only a constant factor more measurements than information-theoretically necessary \cite{djackov_1975}. Moreover, the additional constant, which depends on the sparsity $\theta$, approaches one as $\theta \downarrow 0$. This result is enabled by a technically novel polynomial-time construction of a pooling scheme, which is based on the \emph{spatial coupling technique} from coding theory, in combination with a one-stage thresholding algorithm called \algoname.

\subsection{Model} \label{sec:model}
We now give the necessary details of our model. Throughout the article, the number of items will be denoted by $n$. Each of the $n$ items $x_1, \ldots, x_n$, is assigned a label $\SIGMA_i:=\SIGMA(x_i) \in \cbc{0, 1, 2, \ldots, d}$. The vector $\SIGMA$ is called the \emph{ground-truth}, and while $n$ will tend to infinity below, $d$ is assumed to be a fixed number. 
Denote by $k_0, \ldots, k_d$ the numbers of items with label $0, \ldots, d$ such that $k_i = \lfloor n^{\theta_i} \rfloor$ for some $\theta_i \in (0,1)$, $i=1, \ldots, d$. We set $k = \max\{k_i: i=1, \ldots, d\}$ and $\theta = \max\{\theta_i: i=1, \ldots, d\}$. 
Throughout the article, we restrict to the sublinear regime in which one of the $n$ labels, say $0$, is asymptotically dominant in the sense that $k = \lfloor n^{\theta}\rfloor$ for $\theta \in (0,1)$. The constant $\theta$ is also called the sparsity of the model. Given $k_1, \ldots, k_d$, we assume that $\SIGMA$ is uniformly distributed over all vectors in $\{0, \ldots, d\}^n$ that have $k_i$ coordinates of label $i$ for $i=1, \ldots, d$ and $n-k$ entries of label $0$.

A pooling scheme for $x_1, \ldots, x_n$ corresponds to the specification of $m$ pools $a_1, \ldots, a_m$, where each $a_i$ is a subset of $\{x_1, \ldots, x_n\}$ of arbitrary cardinality. 
We visualise pooling schemes $\vec G$ through bipartite graphs (see Figure \ref{fig:example}). In this graphical representation, the $n$ items constitute one set of vertices (the so-called \emph{variable nodes}), and the $m$ pools $a_1, \ldots, a_m$ constitute the second set of vertices (the so-called \emph{factor nodes}). An edge between a variable node $x_i$ and a factor node $a_j$ indicates that item $x_i$ is part of pool $a_j$. We also impose the condition that each item $x_i$ can only be a part of a specific measurement $a_j$ \emph{once}, thus $\vec G$ is a simple graph. Additionally, every factor node comes with a label vector, which encodes the histogram of the label frequencies within the associated pool (see Figure \ref{fig:example}).

The vector of measurement results of a given pooling scheme $\vec G$ will be denoted by $\hat \SIGMA_{\vec G}$. Recall that we restrict ourselves to non-adaptive pooling designs, where all measurements are conducted in parallel.

\begin{figure}[ht]
%\begin{minipage}[c][][b]{0.99 \textwidth}
\centering%\small
\begin{tikzpicture}[scale=0.65]
\node[circle, draw, minimum width=0.66cm] (x0) at (0, 0) {$1$};
\node[circle, draw, minimum width=0.66cm] (x1) at (2,0) {$2$};
\node[circle, draw, minimum width=0.66cm] (x2) at (4, 0) {$0$};
\node[circle, draw, minimum width=0.66cm] (x3) at (6, 0) {$0$};
\node[circle, draw, minimum width=0.66cm] (x4) at (8, 0) {$1$}; 
\node[circle, draw, minimum width=0.66cm] (x5) at (10, 0) {$0$};
\node[circle, draw, minimum width=0.66cm] (x6) at (12, 0) {$0$};

\node[rectangle, draw, minimum width=0.5cm, minimum height=0.5cm] (a1) at (0, -2.5) {$(1,1,1)$};
\node[rectangle, draw, minimum width=0.5cm, minimum height=0.5cm] (a2) at (3,-2.5) {$(2,2,0)$};
\node[rectangle, draw, minimum width=0.5cm, minimum height=0.5cm] (a3) at (6, -2.5) {$(3,1,1)$};
\node[rectangle, draw, minimum width=0.5cm, minimum height=0.5cm] (a4) at (9, -2.5) {$(3,1,0)$};
\node[rectangle, draw, minimum width=0.5cm, minimum height=0.5cm] (a5) at (12, -2.5) {$(2,1,0)$};

\path[draw] (x0) -- (a1);
\path[draw] (x0) -- (a2);
\path[draw] (x0) -- (a3);
\path[draw] (x1) -- (a1);
\path[draw] (x1) -- (a3);
\path[draw] (x2) -- (a1);
\path[draw] (x2) -- (a3);
\path[draw] (x2) -- (a2);
\path[draw] (x3) -- (a3);
\path[draw] (x3) -- (a4);
\path[draw] (x3) -- (a5);
\path[draw] (x4) -- (a2);
\path[draw] (x4) -- (a5);
\path[draw] (x4) -- (a4);
%\path[draw] (x5) -- (a3);
\path[draw] (x5) -- (a2);
\path[draw] (x5)-- (a4);
\path[draw] (x6) -- (a3);
\path[draw] (x6) -- (a5);
\path[draw] (x6) -- (a4);
\end{tikzpicture}
%\end{minipage}
\caption{Graphical representation of a pooling scheme for $d=2$: Here, the $n=7$ items are represented by circles, while the $m=5$ pools are represented by rectangles. Edges between items and pools are present whenever an item is an element of the corresponding pool. The label of an item represents its actual label, while the label of a pool represents the output of its measurement.}
\label{fig:example}
\end{figure}
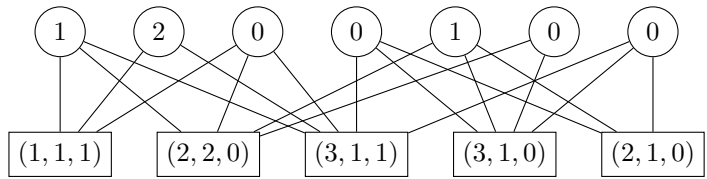
\subsection{Previous work}
This subsection provides a short overview over the literature that is closest to our setting. For this, recall that $k=\lfloor n^{\theta}\rfloor $ with $\theta \in (0,1)$. 
The combinatorial version of the quantitative group testing problem, where to goal is to correctly identify the ground truth with probability exactly \emph{equal} to one, has been studied since the 1960s \cite{shapiro_1960}. For this problem, \cite{djackov_1975} provides the lower bound 
\[m_{\mathrm{Djackov}} = (2+o(1)) \frac{1 - \theta}{ \theta} k\]
%which states that, even given unlimited computational power, it is impossible to infer the item's labels with less than $\mdjackov$ pooled measurements in QGT. 
on the number of pools. 
Since learning $d+1$ labels is clearly harder than learning $2$ labels, $m_{\mathrm{Djackov}}$ also constitutes a lower bound for the general pooled data problem with $d+1$ labels.

For the general pooled data problem, and again in the combinatorial setting, the (non-constructive) arguments of  \cite{gebrinski_2000} yield the upper bound
\[m_{\mathrm{GK}}  = (4+o(1)) \frac{1 - \theta}{ \theta} k\]
%measurements suffice for exponential-time constructions to recover the labels. 
on the number of necessary pools. This naturally also forms an upper bound on the number of pools that are necessary to infer the ground truth with high probability.

Recently, two independent works \cite{feige2020quantitative, gebhard2021quantitative} have shown that in the probabilistic quantitative group testing problem, there indeed is a sharp information-theoretic threshold at
\begin{align}\label{minf}
   \mdjackov^{\mathrm{inf}} =  2 \frac{1 - \theta}{ \theta} k.
\end{align}
This means that for any $\delta>0$, there exists no pooling scheme on $(1-\delta)\mdjackov^{\mathrm{inf}}$ tests that recovers the ground truth with non-vanishing probability, while there is a pooling scheme on $(1+\delta)\mdjackov^{\mathrm{inf}}$ pools from which $\SIGMA$ can be recovered w.h.p. Therefore, from an information-theoretic point of view, the probabilistic version of the sublinear quantitative group testing problem is completely understood. Again, the lower bound for QGT immediately transfers to a lower bound for the general pooled data problem. 
Indeed, also the upper bound for $d=2$ immediately transfers to $d \geq 2$ (by the same argument as the one presented in Section \ref{sec:d1} to transfer the algorithmic result). Accordingly, the sublinear pooled data problem can be considered as being fully understood from an information-theoretic perspective.

However, when it comes to the question of \emph{efficient} inference of the label vector, the picture is a rather different one. For this, observe that the results of \cite{feige2020quantitative, gebhard2021quantitative} yield a pooling scheme on $(1+\delta)2 \frac{1 - \theta}{ \theta} k$ pools such that w.h.p., there is exactly one $\sigma \in \{0,1\}^{\{x_1, \ldots, x_n\}}$ that matches the measurement results. Therefore, a naive algorithm to find the ground truth would be to go through all $\binom{n}{k}$ such vectors and see which one explains the measurements. This procedure yields a subexponential time algorithm on the information-theoretically optimal number of pools. To obtain a polynomial-time algorithm, different approaches have been proposed, alas at the expense of a greater number of measurements. First, algorithms from the more general compressed sensing problem, such as basis pursuit, can be used to infer the ground-truth \cite{candes, donoho_l1}. Since the literature on compressed sensing is vast, we refrain from reviewing it in more detail. %The probably most influential contributions are \cite{candes} and \cite{donoho_l1}. 
The important point with respect to the pooled data problem is that all compressed sensing algorithms require $\Omega \bc{ k \log n  }$ pools. 
However, these algorithms are designed to also apply to non-integer-valued label vectors. 
Therefore, more specialised algorithms have been proposed especially for the QGT problem. Yet, up to date, also all these tailor-made algorithms rely on $\Omega \bc{ k \log n  }$ pools to ensure w.h.p. inference \cite{coja_spiv, feige2020quantitative, gebhard2021quantitative, karimi_2019}.
\subsection{Our Contribution}
Our main result closes the apparent multiplicative $\ln n$ gap between information-theoretic and algorithmic achievability. Recall $\mdjackov^{\mathrm{inf}}$ from \eqref{minf}. Formally, our result is then given by the following theorem:
\begin{theorem} \label{thm_main_statement}
    For $d=1$ and any $\theta \in (0,1), \delta > 0$ there exist a randomised, non-adaptive pooling scheme $\vec G$ on
   $$ m \leq %(1 + \delta)\cdot \frac{1+\sqrt{\theta}}{1-\sqrt{\theta}} \cdot 2 \frac{1-\theta}{\theta} k = 
   (1+\delta) \cdot \frac{1+\sqrt{\theta}}{1-\sqrt{\theta}} \cdot \mdjackov^{\mathrm{inf}} $$
   pools and a polynomial time algorithm that, given $\vec G$ and the measurement results $\hat{\SIGMA}_{\vec G}$, outputs $\SIGMA$ w.h.p.
\end{theorem}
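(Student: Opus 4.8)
\emph{Overall strategy.} The plan is to prove Theorem~\ref{thm_main_statement} by exhibiting an explicit randomised design $\vec G$, coupling it with the algorithm \algoname, and analysing the two together. I would organise the argument around three pieces: (i) the construction of a \emph{spatially coupled} pooling scheme whose pool count is $\mseed+\mbulk$, with $\mseed=o(k)$ a negligible ``seed'' and $\mbulk=(1+o(1))\,\frac{1+\sqrt\theta}{1-\sqrt\theta}\,\mdjackov^{\mathrm{inf}}$; (ii) the specification of \algoname\ as a seed-initialised, single-pass thresholding rule; and (iii) a propagation-of-correctness analysis showing that \algoname\ reconstructs $\SIGMA$ \whp\ One should keep in mind that a single counting measurement carries only about $\tfrac12\log k$ nats of information — this is exactly what produces the factor $2$ in $\mdjackov^{\mathrm{inf}}$ — so the entire difficulty lies in converting this square-root-scale signal into an \emph{exact} combinatorial reconstruction of all $n$ labels, without paying the $\log n$ factor that a single global threshold over all items would incur.

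\emph{The design.} I would place the $n$ items on $\ell$ consecutive compartments $V_1,\dots,V_\ell$ of equal size and attach to each spatial position a batch of pools sampling items only from a window of $w$ neighbouring compartments, so that locally the instance resembles a dense quantitative group testing problem while globally remaining sparse. Within each window I would employ $s$ geometrically spaced test intensities; the combination $\wq$ is precisely the weighted signal that the thresholding step will read off, and balancing this signal against the standard deviation of the residual background count — optimised over the intensity profile and the pool size — is what I expect to yield the constant $\frac{1+\sqrt\theta}{1-\sqrt\theta}$, with the exponent $\tfrac12$ in the square root being the source of the $\sqrt\theta$. A small seed of $\mseed=o(k)$ extra pools over-determines the first few compartments so that they can be decoded outright. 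Letting $\ell,w,s\to\infty$ slowly with $n$ renders the seed and the window boundary asymptotically negligible, so the bulk density alone governs the leading-order pool count.

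\emph{The algorithm and its analysis.} \algoname\ first decodes the over-determined seed compartments, and then sweeps once across the line: at position $t$ it subtracts the already-identified items' contributions from every pool meeting $V_t$, forms for each undecided item the weighted score of the pools containing it (with weights as in $\wq$), and classifies it by a threshold. The core is an inductive claim: conditional on compartments $1,\dots,t-1$ being decoded without error, the residual counts feeding into $V_t$ behave like those of a locally over-determined instance, so the defective and non-defective scores separate with a uniform gap. The decisive feature of spatial coupling is that this decision is \emph{local}: the relevant union bound runs only over the $n^{o(1)}$ items of a single window (equivalently, over the slowly growing number of decoding stages), rather than over all $n$ items at full fluctuation. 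Since $\log(n^{o(1)})=o(\log n)$, the per-stage signal-to-noise requirement is sub-logarithmic, and this is exactly what collapses the notorious $\log n$ factor into the constant $\frac{1+\sqrt\theta}{1-\sqrt\theta}$ after optimisation. The per-stage separation itself I would establish via a Chernoff/sub-Gaussian tail bound on the score, whose mean is the signal encoded by $\wq$ and whose variance is controlled by the number of window-defectives sharing a pool with the item in question.

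\emph{Main obstacle.} I expect the crux to be the control of \emph{error propagation} along the sweep: the subtraction at position $t$ relies on the exactness of all earlier decodings, so a single mistake could corrupt every subsequent residual and cascade. The heart of the proof is therefore to show that the ``all-correct-so-far'' event is maintained with failure probability $o(1/\ell)$ per step, which requires quantifying the separation gap sharply and tuning the intensity profile and window width so that the seed's initial over-determination is propagated, self-sustainingly, all the way across the line. A secondary technical burden is the dependency created by conditioning on $\SIGMA$ being uniform of fixed weight rather than having i.i.d.\ coordinates; I would handle this by passing to a Poissonised, independent surrogate model and transferring the estimates back by a standard comparison. Once these concentration and propagation estimates are in place, optimising the free parameters $\ell,w,s$ and the intensity profile delivers the bound claimed in Theorem~\ref{thm_main_statement}.
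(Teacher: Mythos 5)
Your overall architecture (spatial coupling into compartments with a sliding window, a seed that initialises the sweep, a single-pass thresholding score formed from residual pool counts, and a propagation-of-correctness argument) matches the paper's. However, there are two genuine gaps. The first is that you identify the wrong mechanism for the disappearance of the $\log n$ factor. You claim that ``the relevant union bound runs only over the $n^{o(1)}$ items of a single window'' and that this is ``exactly what collapses the notorious $\log n$ factor.'' This is false on two counts: a compartment contains $n/\ell = n^{1-\theta(1-\eps_2)}$ items, which is polynomially large (close to $n$ for small $\theta$), and in any case every one of the $n$ items must be classified correctly, so the per-item error probability must be $o(1/n)$ no matter how the union bound is organised -- the paper's bound \eqref{Eq:union_bound} is indeed $n\cdot\Pr(\text{false positive})+k\cdot\Pr(\text{false negative})$. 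The actual source of the saving is the structure of the score: for $x\in V[i]$, a pool in $F[i+j]$ has only $j+1$ of its $s$ item-compartments still undecoded, so its residual noise variance scales like $(j+1)/s$ while its signal is always $\Delta_x[j]\SIGMA_x$; normalising each window position by $(j+1)$ and summing yields a squared signal-to-noise ratio of order $(m/k)\ln s$ rather than $m/k$. Since $s$ is polynomial in $n$, this gives per-item error probability $s^{-\Theta(m/k)}=n^{-\Theta(1)}$ already at $m=\Theta(k)$, and optimising the threshold location $\alpha$ against the two exponents in \eqref{eq:inErr1a2}--\eqref{eq:inErr2a2} is what produces $c_\theta=\frac{1+\sqrt\theta}{1-\sqrt\theta}$. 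Your ``geometrically spaced test intensities'' read from the combination $\wq$ do not appear in the paper's design (all pools have the same size $\Gamma$, drawing $\Gamma/s$ items from each window compartment); the weighting lives entirely in the score normalisation and is harmonic, not geometric.

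The second gap is that you omit the paper's key structural novelty: one extra pool $A_i$ per compartment measuring the exact number $\vec k[i]$ of defectives in $V[i]$. This is not a cosmetic addition. The centering term $\mathcal M_x^j$ in \eqref{def_cent} must use the realised $\vec k[i+r]$; if one centres with the global prior $k/\ell$ instead, the resulting systematic offset exceeds the noise standard deviation by a factor of order $\sqrt{\Delta\Gamma\ell/(s^2n)}=\omega(1)$ under the constraints $\Delta/s=\omega(1)$ and $s=o(\ell)$ that the scheme requires, so the thresholding fails. Your proposed Poissonisation of the prior would make this worse, not better, since it increases the per-compartment fluctuations; the paper avoids Poissonisation entirely by conditioning on the measured counts and working with exact hypergeometric distributions plus Bernstein's inequality for negatively associated variables. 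Finally, a smaller remark: the paper's seed consists of artificial auxiliary items with labels known by construction, not real compartments decoded by $o(k)$ extra over-determining pools; your variant could be made to work but is not what is analysed here, and the ``error cascade'' you worry about is resolved simply by showing that the all-items-correct event $\fB^c$ holds w.h.p., under which the algorithmic scores $\cN_x$ coincide with the idealised $\vec N_x$ up to $o(1)$.
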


The pooling scheme and algorithm from Theorem \ref{thm_main_statement} immediately imply the following:
\begin{corollary} \label{thm_main_corollary}
    For any $d \in \NN, \theta \in (0,1), \delta > 0$ there exist a randomised, non-adaptive pooling scheme $\vec G$ on
   $$ m \leq %(1 + \delta)\cdot \frac{1+\sqrt{\theta}}{1-\sqrt{\theta}} \cdot 2 \frac{1-\theta}{\theta} k = 
   (1+\delta) \cdot \frac{1+\sqrt{\theta}}{1-\sqrt{\theta}} \cdot \mdjackov^{\mathrm{inf}} $$
   pools and a polynomial time algorithm that, given $\vec G$ and the measurement results $\hat{\SIGMA}_{\vec G}$, outputs $\SIGMA$ w.h.p.
\end{corollary}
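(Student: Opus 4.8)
The plan is to reduce the general $(d{+}1)$-label problem to $d$ instances of the binary case $d=1$ already settled in Theorem \ref{thm_main_statement}, using the fact that a single histogram measurement simultaneously encodes a quantitative group testing measurement for \emph{every} label. Concretely, for each label $i \in \{1,\ldots,d\}$ set $\SIGMA^{(i)} := \mathbf 1\{\SIGMA = i\} \in \{0,1\}^n$, the indicator vector of the items carrying label $i$. For any pool $a_j$ with histogram $(c_{j,0},\ldots,c_{j,d})$ one has $c_{j,i} = \sum_{x \in a_j}\SIGMA^{(i)}(x)$, which is exactly the QGT measurement of $\SIGMA^{(i)}$ on the \emph{same} bipartite graph. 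Moreover, since $\SIGMA$ is uniform over all vectors with the prescribed label counts, the marginal law of $\SIGMA^{(i)}$ is uniform over all binary vectors with exactly $k_i=\lfloor n^{\theta_i}\rfloor$ ones, which is precisely the input distribution of Theorem \ref{thm_main_statement} at sparsity $\theta_i$. Thus each label can, in principle, be decoded by the $d=1$ machinery.

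The construction and the pool count are as follows. Write $I_{\max} = \{i : \theta_i = \theta\}$. First I would build a single scheme $\vec G_\star$ from Theorem \ref{thm_main_statement} for sparsity $\theta$, with $m_\star \le (1+\delta)\frac{1+\sqrt\theta}{1-\sqrt\theta}\mdjackov^{\mathrm{inf}}$ pools, and, for each $i$ with $\theta_i < \theta$, a separate scheme $\vec G^{(i)}$ from the theorem for sparsity $\theta_i$; the overall scheme $\vec G$ is their superposition (the union of all factor nodes). To decode, for every $i \in I_{\max}$ I feed the extracted vector $(c_{j,i})_j$ read off the histograms of $\vec G_\star$ into the $\theta$-algorithm, and for every $i$ with $\theta_i<\theta$ I feed $(c_{j,i})_j$ read off $\vec G^{(i)}$ into the $\theta_i$-algorithm. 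The crucial accounting point is that reusing the \emph{one} scheme $\vec G_\star$ across all labels of maximal sparsity avoids an unwanted factor $|I_{\max}|$, while each sub-dominant scheme uses only $m_i = \Theta(n^{\theta_i}) = o(n^\theta)$ pools. Since $\mdjackov^{\mathrm{inf}} = \Theta(n^\theta)$, the total is $m_\star + \sum_{\theta_i<\theta} m_i = (1+o(1))\,m_\star \le (1+\delta')\frac{1+\sqrt\theta}{1-\sqrt\theta}\mdjackov^{\mathrm{inf}}$ after absorbing the lower-order term into the slack.

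For correctness, each algorithm recovers its $\SIGMA^{(i)}$ w.h.p.\ by Theorem \ref{thm_main_statement}, applied at the appropriate sparsity and to the correct uniform marginal; a union bound over the $d$ (constantly many) failure events yields simultaneous recovery of all indicators w.h.p. Since the true vectors $\SIGMA^{(i)}$ have pairwise disjoint supports, on this event I set $\SIGMA_\ell = i$ whenever $\SIGMA^{(i)}_\ell = 1$ and $\SIGMA_\ell = 0$ otherwise, which reconstructs $\SIGMA$ exactly; the whole procedure consists of $d$ polynomial-time calls plus $O(nd)$ bookkeeping and is therefore polynomial. The only genuine obstacle is the pool-count bookkeeping: a naive ``run $d$ independent copies'' reduction would multiply the leading constant by $|I_{\max}|$, and the two ingredients that prevent this—that one histogram measurement is simultaneously a QGT measurement for all labels, so a single scheme can be decoded $|I_{\max}|$ ways, and that sub-dominant labels cost only $o(n^\theta)$ pools—are exactly what keep the constant in front of $\mdjackov^{\mathrm{inf}}$ equal to that of the binary theorem. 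Everything else is an immediate consequence of Theorem \ref{thm_main_statement}.
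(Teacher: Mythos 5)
Your proposal is correct and follows essentially the same route as the paper: reduce to $d$ applications of the $d=1$ result by reading off, for each label $i$, the $i$-th histogram coordinate as a QGT measurement of the indicator vector $\SIGMA^{(i)}$, reuse one scheme for all labels of maximal sparsity, and attach $o(n^{\theta})$ extra pools for the sub-dominant labels. The paper phrases the reuse as ``rounds'' in which all labels outside $\{0,o\}$ are treated as $0$, which is the same observation as your indicator-vector extraction, and the pool-count bookkeeping (slack $\delta/2$ for the dominant scheme, lower-order terms absorbed) matches yours.
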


% \mhk{Do you think we should have this figure ? I am not sure, but I have no strong feelings in either direction.}
\begin{figure}[h!]
    \centering
    \includegraphics[scale=0.5]{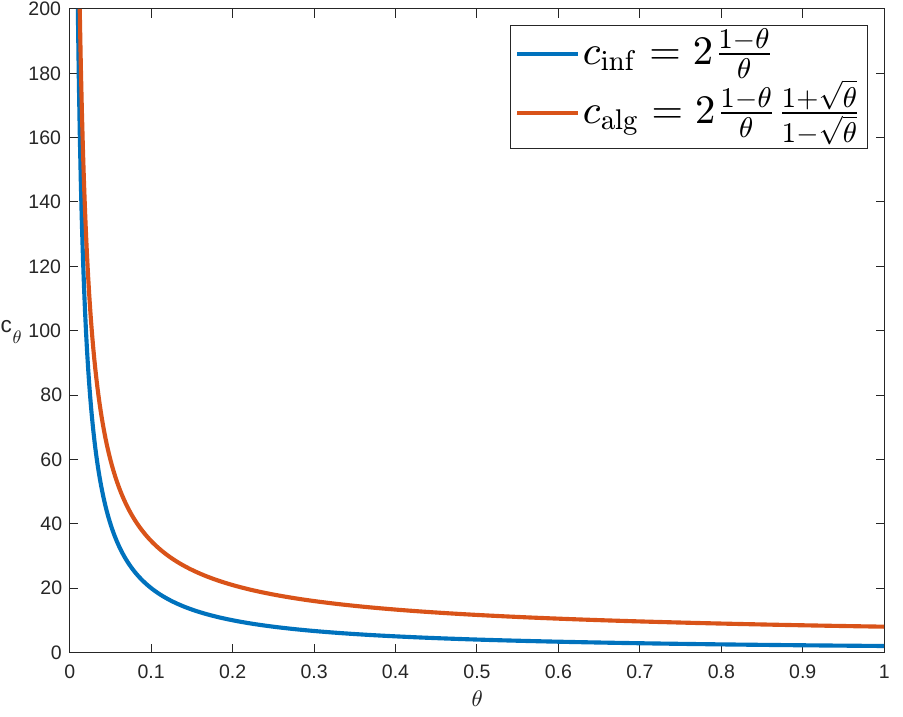}
    \caption{The contrast between the multiplier for the information theoretic bound (blue, $c_{\mathrm{inf}}$) and the current result (red, $c_{\mathrm{alg}}$), when the number of tests is represented by the form $m=c_{\theta}k$.}
    \label{fig:Ctheta}
\end{figure}

Observe that for $\theta \downarrow 0$, the bound in Theorem \ref{thm_main_statement} approaches the information-theoretic threshold, while the additional factor of $(1+\sqrt{\theta})/(1-\sqrt{\theta})$ becomes worse as $\theta \uparrow 1$ (see Figure \ref{fig:Ctheta}). We beat the previous $\ln n$ barrier through the design of a novel pooling scheme, which is based on the spatial coupling technique from coding theory. The latter is augmented by the introduction of ``local'' measurements that allow to determine the number of items of a specific label within strategically chosen ``smaller'' subgroups. This novel design enables a threshold-based, one-stage decoding algorithm to draw conclusions based on this precise local picture. Indeed, the resulting ``local prior'' is necessary to control potentially large deviations from the global prior. As \algoname \hspace{0.01cm} is highly inspired by \emph{Approximate Message Passing}, or more precisely, the first step of a standardised variant, it is not surprising that a good guess on the prior improves the algorithm's performance. Therefore, we believe that the construction itself might be of general interest.
\subsection{Discussion and Outlook}
Our main result Theorem \ref{thm_main_statement} completely removes the previous multiplicative $\log n$-gap between the information-theoretic threshold $\mdjackov^{\mathrm{inf}}$ and all designs for efficient algorithms in the sublinear pooled data problem. It therefore shows that the $\ln n$ factor was artificial, and not a consequence of inherent computational hardness of the problem in a certain parameter regime. Similar information-computation gaps have been observed in a series of prominent examples such as the planted clique problem or sparse principal component analysis, and the understanding of their origins is of great scientific interest. 

In the sublinear \emph{(binary) group testing} problem, such a gap has been recently closed completely \cite{coja_spiv}. The binary group testing problem is similar to the QGT problem, as again the goal is to infer a uniform label vector $\SIGMA \in \{0,1\}^{\{x_1, \ldots, x_n\}}$ that has exactly $k=n^{\theta}$ ones. The important difference to the QGT problem is that each pooled measurement outputs only whether there is at least one item of label $1$ in the corresponding pool. 
In \cite{coja_spiv} closed a constant factor gap of size approximately $\log^{-1}(2)$ by adapting the spatial coupling method from coding theory \cite{aco_spatialcoupling_felstrom, kudekar_2013}. 
While we also employ a modified variant of the spatial coupling design in the present article, key changes in the actual implementation are crucial to make the general approach work.

Put simply, spatial coupling induces an order to the items by dividing them into separate compartments. 
This procedure generally facilitates learning of the labels, as an inference algorithm can decode items compartment by compartment in a way that uses the information about the labels in prior, already decoded compartments. 
In the binary group testing problem, a thresholding algorithm based on the number of pools that an item is part of and which do not contain an item with label $1$ from the previous compartments turned out to be powerful enough to infer the labels w.h.p. 
An analogous approach for the QGT problem would then use a thresholding algorithm that looks at the sum of the pools that an item is part of, minus the labels of all items from previous compartments that appear in these pools. 
Unfortunately, it is straightforward to verify that this approach based on \emph{unexplained neighbourhood sums} does not remove the additional $\log n$-factor.
The main source of failure in this approach lies in reverse trends in the size of partial neighbourhood sums and their information content (see Section \ref{SSec:UnexpNbhd}). 

To overcome this barrier, we introduce an augmentation of the compelling spatial coupling design, which might be of independent interest in other problems as well, such as non-Bayes optimal inference problems, where the global prior is unknown or estimated falsely.
Indeed, our main modification of the basic spatial coupling design is the introduction of an asymptotically negligible number of pools at the right places, which measure the numbers of items of each label within the compartment. This ``local'' information allows \algoname \hspace{0.01 cm} to compute a more accurate score function for each item. 
It also enables an application of \algoname \hspace{0.01 cm} to the setting where each item is assigned a label independently with probability $k_i/n$ for label $i \in \{0, \ldots, d\}$. 
This is a consequence of the observation that i.i.d. sampling of the labels will lead to the same expected numbers of items of each label as in our present setting with negligible fluctuations. The exact numbers of items of each label will be determined during the inference procedure through the local, additional pools, allowing to proceed as in the current article.

An extended abstract of an earlier variant of this work was presented at the Conference on Learning Theory (COLT) 2022 \cite{hahnklimroth_2022_qgt}. The current version improves the underlying pooling scheme and therefore requires a constant factor less measurements than \cite{hahnklimroth_2022_qgt}.

The most conspicuous open problem that arises from the current work is whether the remaining constant multiplicative gap between the information-theoretic threshold and our bound is an artefact of \algoname, its performance analysis, or whether there indeed is a gap between exponential time constructions and efficient algorithms of a certain type, such as algorithms expressed by low-degree polynomials. A similar gap with regard to this rich class of algorithms has recently been discovered in other related inference problems \cite{bandeira2022franz, pmlr-v178-coja-oghlan22a, wein2022optimal}. We conjecture that our bound is not the final answer, since we employ a rather naive union bound to estimate the overall error probability, while in reality, the neighbourhood sums of items within the same compartment should be highly correlated.

Finally, let us briefly elaborate on the condition that each item is included in a given pool at most once, since to the best of our knowledge, this seems to have been assumed without further specification in previous work on the topic. Indeed, if there is no such restriction, then the problem can be solved efficiently by a \emph{single} test. This single pool is constructed by including each item $x_i, i=1, \ldots, n$, $d^{i-1}$ times, yielding a histogram that can be transformed through summation into a test result of 
\begin{align*}
    \hat \SIGMA = \sum_{i=1}^n d^{i-1} \SIGMA_{i}.
\end{align*}
The decoding scheme would then simply convert $\hat \SIGMA$ into its corresponding $d$--ary string of length $n$.
\subsection{Notation}\label{sec_notation}
Throughout the article, for $n \in \NN$, we often abbreviate $[n]:=\{1, \ldots, n\}$ and $[n]_0:=\{0,1, \ldots, n\}$.  If $z \in \RR^p$, then we also use the notation $z(t)$ to refer to the $t$-th component of $z$, where $t \in [p]$. We also adopt the convention that random quantities are indicated by bold letters.

Let 
\begin{align}
    V_{\mathrm{bulk}}:=\{x_1, \ldots, x_n\}
\end{align}
be the set of items. 
We aim to infer the labels $\SIGMA_1, \ldots, \SIGMA_n \in [d]_0$ of the $n$ items in $V_{\mathrm{bulk}}$, where the vector $\SIGMA:= (\SIGMA_1, \ldots, \SIGMA_n) \in [d]_0^n$ is chosen uniformly at random from all vectors containing exactly $k_i$ entries of value $i$ for each $i \in [d]$.
Here, $k_i = \lfloor n^{\theta_i}\rfloor$ for $i \in [d]$, and $\theta_i \in (0,1)$. We let $\theta = \max\{\theta_i: i \in [d]\}$ and abbreviate $k = \max k_i$ such that $k = \lfloor n^{\theta}\rfloor$.

The available information about the labels is given by label-histograms from $m$ subsets $a_1, \ldots, a_m$ of $V_{\mathrm{bulk}}$. We call these subsets pools. 
In the following, we will represent pooling schemes $\{a_1, \ldots, a_m\}$ as bipartite graphs, as described in Section \ref{sec:model}. In this representation, the pools and the items yield the two vertex classes of the bipartite graph (see Figure \ref{fig:example}).
We denote the set of tests that item $x_i$ participates in by $\partial x_i$ and the items partaking in pool $a_j$ by $\partial a_j$. These can analogously be thought of as the neighbourhoods of the associated vertices in the bipartite graph. Finally, we denote the label-histogram of pool $a_j$ by $\hat \SIGMA_j \in [n]^{d+1}$ such that 
$\hat \SIGMA_j(\omega) = \sum_{i: x_i \in \partial a_j} \vecone \cbc{\SIGMA_i = \omega}$. For $d=1$, the case of \emph{quantitative group testing}, we let $\hat \SIGMA_j = \sum_{i: x_i \in \partial a_j} \vecone \cbc{\SIGMA_i = 1} \in \NN$ and $\SIGMA^{-1}(\{1\})=\{x_i \mid \SIGMA_i =1 \}$.

We denote by Hyp$(N,M,K)$ the \textit{hypergeometric distribution} with parameters $N \in \NN_0, M,K \in \cbc{0, \ldots, N}$, which is defined by its point masses
\begin{align*}
    \text{Hyp}(N,M,K)\bc{\cbc{j}} = \frac{\binom{M}{j}\binom{N-M}{K-j}}{\binom{N}{K}}  \text{ for} \quad j \in \cbc{\max\cbc{0, K+M-N}, \ldots, \min\cbc{K,M}}.
\end{align*}
Finally, a list of all relevant notation that is used throughout the article can be found in Appendix \ref{appendix_notation}.
\subsection{Outline} 
The remainder of the manuscript is organised as follows. In Section \ref{Sec:Imp}, we present a basic outline of the novel pooling scheme and the inference algorithm \algoname. Section \ref{Sec:Proof} contains the proof of Theorem \ref{thm_main_statement}. In this proof, we first analyse the algorithm idea for general parameters and then optimise their choice. Correspondingly, Section \ref{SSec:Constraints} presents the main restrictions on our general pooling scheme. The following Section \ref{SSec:PoolingScheme} collects the main properties of the pooling scheme that will be used later. Section \ref{SSec:PropUnexpNbhd} then concentrates on the properties of unexplained neighbourhood sums and discusses the thresholding scheme used by \algoname. The section concludes with Section \ref{SSec:Proof}, which proves the correctness of \algoname \hspace{0.01 cm} and therefore the main result. Finally, Section \ref{sec:d1} gives the proof of Corollary \ref{thm_main_corollary}.

\section{Implementation} \label{Sec:Imp}
The efficient inference algorithm of Theorem \ref{thm_main_statement} is composed of two components: a pooling scheme and an efficient inference algorithm which makes use of the pooling scheme. The pooling scheme is based on a spatially coupled design which is a technique that has been used in compressed sensing and coding theory 
\cite{aco_spatialcoupling_felstrom, Kudekar_2010_2}.
On a high level, the spatially coupled design introduces a localised structure to the bipartite pooling scheme graph which allows for greater control over the concentration of the respective labels.
Our algorithm  will then sequentially classify batches of items using a threshold based upon a certain score for each item.
Thanks to Corollary \ref{thm_main_corollary}, we restrict to the case $d=1$ throughout this section.

\subsection{Augmented Spatial Coupling} \label{SSec:SpCoup}
Recall that a pool is a subset of $V_{\mathrm{bulk}}$, which is represented by a factor node along with its neighbourhood in the associated bipartite graph. The pooling scheme that constitutes the basis for \algoname \hspace{0.01 cm} builds upon the introduction of a spatial order to both items and pools. 
For this, we partition the set of items $V_{\mathrm{bulk}}$ into $\ell = o(k)$ \emph{compartments} $V[s], \ldots, V[s + \ell -1] \subset V_{\mathrm{bulk}}$ of (almost) equal sizes $|V[i]| \in \{\lfloor n/\ell \rfloor, \lceil n/\ell \rceil\}$. 
We denote by $\vec k_i[j]$ the random number of items with label $i$ in compartment $V[j]$. In the case of QGT, we write $\vec k[j] =  \vec k_1[j]$ for brevity. 
Analogously, for an integer $m$ divisible by $\ell + s - 1$, we partition the $m$ pools into $\ell + s - 1$ compartments such that each of the compartments contains exactly $m/(\ell + s - 1) \sim m/\ell$ pools. 
More precisely, we split the tests into compartments denoted by $F[1], \ldots, F[\ell + s - 1]$.
The partition of items and pools will allow us to successively infer the labels of compartments $V[i]$ for $i \in \{ s, s+1, \ldots, s+\ell-1\}$, using the information of previous compartments along the way. The precise sizes of all parameters can be found in Section \ref{Sec:Proof}.

Additionally, we extend this setup by a technical novelty: We introduce one additional pool per compartment, which is used to identify the precise number of items of label $1$ within that compartment. This information crucially enables \algoname \hspace{0.01 cm} to correctly guess the ``local'' probability of having label $1$ within the given compartment.
More formally, for each compartment $V[i]$ $(i = s, \ldots, \ell + s - 1)$, we add one pool $A_i$ that contains all items of $V[i]$. 
The corresponding measurement is denoted by $\hat \SIGMA_{A_i}$. 
Observe that this only adds $\ell = o(k)$ additional pools and thus can be disregarded asymptotically.

To facilitate the initial steps of \algoname, we moreover introduce $s-1$ artificial compartments $V[1]$, $\ldots$, $V[s-1]$. Each of the artificial compartments  contains $\lceil n/\ell \rceil$ \emph{auxiliary items}, such that, in total, we have introduced $n' := (s-1) \lceil n/\ell \rceil$ auxiliary items.
In the remainder of this article, we call the $s-1$ compartments $V_{\text{seed}} = V[1]\cup \ldots \cup V[s-1]$ the \emph{seed}, while the remaining compartments $\ell$ constitute the \emph{bulk} $V_{\text{bulk}}$. The complete set of items will then be $V= V_{\text{seed}} \cup V_{\text{bulk}} $.
For the auxiliary items, we sample an assignment $\SIGMA' \in \cbc{0,1}^{n'}$ uniformly at random from all vectors with exactly $\vec k' = \lceil(s-1)k \ell^{-1}\rceil$ ones. This only takes polynomial time and in particular, $\SIGMA'$ is known. 

Any pool in the pooling scheme will contain exactly $\Gamma$ items. More precisely, pool $a \in F[i]$ chooses exactly $\Gamma/s$ items from each of the previous variable compartments $V[i-(s-1)], \ldots V[i]$\footnote{Here and in the following, for $r=0, \ldots, s-2$, we identify $V[-r]$ with $V[\ell + s - 1 - r]$ as well as $F[\ell + s + r]$ with $F[r+1]$, which equips the random bipartite graph with a ring structure.}. In each of these $s$ compartments, the items are chosen uniformly without replacement and independently for different compartments and pools. The number $s$ is called the \textit{sliding window}. The terminology and construction are depicted in Figure \ref{Fig_spatial_coupling_idea}, which also illustrates the ring structure of the compartments by showing the compartments surrounding the seed.

\begin{figure}[ht!]
\begin{tikzpicture}[scale=0.85]

\foreach \i in {1,...,12}
{
        \def\lab{x_\i};
        \node[circle,draw=black!60!green,fill=green!30,minimum size=1] (\lab) at (0.4*\i,0) {};
}
\foreach \i in {13,...,20}
{
        \def\lab{x_\i};
        \node[circle,draw=black, color=blue,minimum size=1, fill=blue!30] (\lab) at (0.4*\i,0) {};
}
\foreach \i in {21,...,36}
{
        \def\lab{x_\i};
        \node[circle,draw=black!60!green, fill=green!30,minimum size=1] (\lab) at (0.4*\i,0) {};
}
\foreach \i in {0,...,9}
{
        \def\x{4*\i};
        \draw[dashed] (0.4*\x+0.2,0.5) -- (0.4*\x+0.2,-2.5);
}
\foreach \i in {1,...,9}
{
        \def\labx{c_\i};
        \def\labaone{done_\i};
        \def\labatwo{dtwo_\i};
        \pgfmathsetmacro{\xcoord}{0.4*(\i+1.5)+3*0.4*(\i-1))};

        \coordinate (\labx) at (\xcoord,-0.3);
        \coordinate (\labaone) at (\xcoord-0.25,-1.4);
        \coordinate (\labatwo) at (\xcoord+0.25,-1.4);
}

\foreach \j in {2,3,4,5,6,7,8,9,10}{
    \pgfmathsetmacro{\ione}{2*\j-1};
    \pgfmathsetmacro{\itwo}{2*\j};
    \def\laba{a_\ione};
    \def\labb{a_\itwo};
    \pgfmathsetmacro{\xcoordone}{0.4*4*(\j-2)+1.4};
    \pgfmathsetmacro{\xcoordtwo}{0.4*4*(\j-2)+1.0};
    \pgfmathsetmacro{\xcoordthree}{0.4*4*(\j-2)+0.6};
    \node[rectangle, minimum size=8,draw=black!60!green] (\laba) at (\xcoordone,-1.7){};
    \node[rectangle, minimum size=8,draw=black!60!green] (\labb) at (\xcoordtwo,-1.7){};
    \node[rectangle, minimum size=8,draw=black!60!green] (\laba) at (\xcoordthree,-1.7){};

}

\foreach \i in {4,5}
{
    \filldraw[fill=blue!20] (c_\i) -- (done_\i) -- (dtwo_\i) -- cycle;
}

\foreach \i in {4,5}
{
    \pgfmathsetmacro{\x}{\i+1};
    \filldraw[fill=blue!40] (c_\i) -- (done_\x) -- (dtwo_\x) -- cycle;
}
\foreach \i in {4,5}
{
    \pgfmathsetmacro{\x}{\i+2};
    \filldraw[fill=blue!60] (c_\i) -- (done_\x) -- (dtwo_\x) -- cycle;
}

\foreach \i in {1,2,3,6,7,8,9}
{
    \filldraw[fill=black!60!green] (c_\i) -- (done_\i) -- (dtwo_\i) -- cycle;
}

\foreach \i in {1,2,3,6,7,8}
{
    \pgfmathsetmacro{\x}{\i+1};
    \filldraw[fill=black!40!green] (c_\i) -- (done_\x) -- (dtwo_\x) -- cycle;
}
\foreach \i in {1,2,3,6,7}
{
    \pgfmathsetmacro{\x}{\i+2};
    \filldraw[fill=black!20!green] (c_\i) -- (done_\x) -- (dtwo_\x) -- cycle;
}

\filldraw[black!40!green] (0.2,-0.7) -- (1.2,-1.4)--(0.7,-1.4)--(0.2,-0.9)--cycle;
\filldraw[black!20!green] (0.2,-1.1) -- (1.2,-1.4)--(0.7,-1.4)--(0.2,-1.2)--cycle;
\filldraw[black!20!green] (0.2,-0.5) -- (2.8,-1.4)--(2.35,-1.4)--(0.2,-0.55)--cycle;
\filldraw[black!40!green] (13.8,-0.3) -- (14.6,-0.7)--(14.6,-0.9)--cycle;
\filldraw[black!20!green] (13.8,-0.3) -- (14.6,-0.5)--(14.6,-0.55)--cycle;
\filldraw[black!20!green] (12.2,-0.3) -- (14.6,-1.1)--(14.6,-1.2)--cycle;

\filldraw[black!30!red] (1,1.6) -- (0.4, 0.3)--(1.6,0.3)--cycle;
\filldraw[black!30!red] (2.6,1.6) -- (2.0, 0.3)--(3.2,0.3)--cycle;
\filldraw[black!30!red] (4.2,1.6) -- (3.6, 0.3)--(4.8,0.3)--cycle;
\filldraw[black!30!red] (9.0,1.6) -- (8.4, 0.3)--(9.6,0.3)--cycle;
\filldraw[black!30!red] (10.6,1.6) -- (10.0, 0.3)--(11.2,0.3)--cycle;
\filldraw[black!30!red] (12.2,1.6) -- (11.6, 0.3)--(12.8,0.3)--cycle;
\filldraw[black!30!red] (13.8,1.6) -- (13.2, 0.3)--(14.4,0.3)--cycle;

\node (A)[text=black!60!green] at (1,2.5) {$V[7]$};
\node (B)[text=black!60!green] at (2.63,2.5) {$V[8]$};
\node (C)[text=black!60!green] at (4.26,2.5) {$V[9]$};
\node[text=blue!80] (D) at (5.87,2.5) {$V[1]$};
\node[text=blue!80] (E) at (7.45,2.5) {$V[2]$};
\node[text=black!60!green]  (F) at (9.05,2.5) {$V[3]$};
\node (G)[text=black!60!green] at (10.65,2.5) {$V[4]$};
\node (H)[text=black!60!green] at (12.28,2.5) {$V[5]$};
\node (I)[text=black!60!green] at (13.91,2.5) {$V[6]$};

\node (J)[] at (1,-2.3) {{\tiny\textcolor{black!60!green}{$F[7]$}}};
\node (K)[] at (2.63,-2.3) {{\tiny\textcolor{black!60!green}{$F[8]$}}};
\node (L)[] at (4.25,-2.3) {{\tiny\textcolor{black!60!green}{$F[9]$}}};
\node (M)[] at (5.83,-2.3) {{\tiny\textcolor{black!60!green}{$F[1]$}}};
\node (N)[] at (7.4,-2.3) {{\tiny\textcolor{black!60!green}{$F[2]$}}};
\node (O)[] at (8.97,-2.3) {{\tiny\textcolor{black!60!green}{$F[3]$}}};
\node (P)[] at (10.6,-2.3) {{\tiny\textcolor{black!60!green}{$F[4]$}}};
\node (Q)[] at (12.22,-2.3) {{\tiny\textcolor{black!60!green}{$F[5]$}}};
\node (R)[] at (13.84,-2.3) {{\tiny\textcolor{black!60!green}{$F[6]$}}};
\node at (-0.2,-1){$\cdots$};
\node at (14.9,-1){$\cdots$};

\end{tikzpicture}
\caption{Schematic representation of the pooling scheme with $n = 28$ items, $\ell = 9$ compartments, $m=18$ measurements and a sliding window of size $s = 3$. The number of (blue) auxiliary items, whose weight is known a priori, is $8$ in this example. The additional red pools on top of the graph test all items in the corresponding compartments to identify the local prior. In the seed, the local prior is known anyways. All other parameters are left unspecified in the figure.
% The red pools of $\tilde F$ denote a set of pools whose results will be only used to correct errors. 
}
\label{Fig_spatial_coupling_idea}

\end{figure}
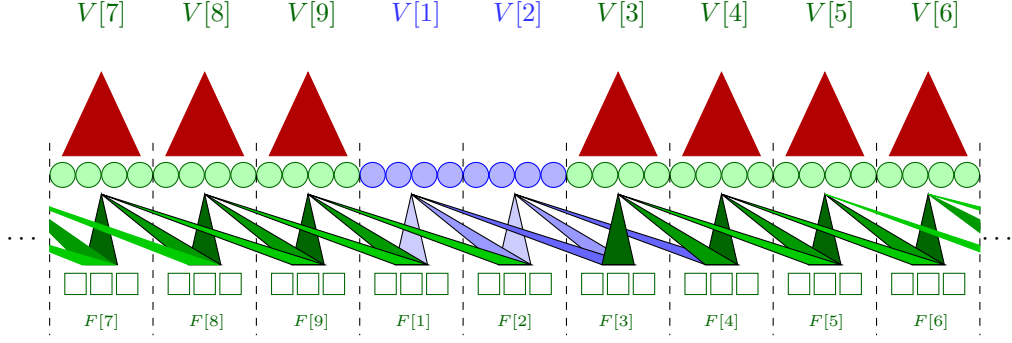

As already mentioned, any pooling scheme can be seen as a bipartite graph. We call the random bipartite graph corresponding to our pooling scheme $\vec G$.  
In this graph, the degrees of the pools are fixed to $\Gamma = \abs{ \partial a_j }$ for all $j \in [m]$. We denote the (random) degrees of the items $x_1, \ldots, x_n$ into the regular pools by $\vec \Delta_{x_1}, \ldots, \vec \Delta_{x_n}$, that is, $\vec \Delta_{x_i} = \abs{ \cbc{ a \in \partial x_i \cap \bc{F[1] \cup \ldots \cup F[\ell]}  } }$.  The number of neighbours of item $x_i$ among the elements of $F[i+j]$ is denoted by $\vec \Delta_{x_i}[j]$, where $j=0, \ldots, s-1$. Finally, we set $\Delta := \Erw[\vec \Delta_{x_1}]$. The notation used in the spatial coupling scheme is summarised in Appendix \ref{appendix_notation}.
\subsection{Unexplained Neighbourhood} \label{SSec:UnexpNbhd}
The previous subsection has set up the pooling scheme. To define the precise threshold procedure that our inference algorithm \algoname \hspace{0.01 cm} is based on, we define a few more quantities. 
In the following, we denote the current estimate of $\SIGMA$ of \algoname \hspace{0.01 cm}  by $\tilde \sigma$. Initially, $\tilde \sigma$ is set to $\SIGMA'$ on the coordinates corresponding to $V_{\text{seed}}$ and to zero on the coordinates corresponding to $V_{\text{seed}}$.

Let $x \in V[i]$ for $i \in \{s, \ldots, \ell + s -1\}$ be an item of the bulk and $j \in \{0, \ldots, s-1\}$.
We then set %define for $j \in \{0, \ldots, s-1\}$
\begin{align}\label{def_unexplained_alg} 
\cU^j_x := \cU^j_x(\tilde \sigma, \vec G) = \sum_{a \in \partial x \cap F[i + j]} \bc{\hat \SIGMA_a - \sum_{r=i-s+j+1}^{i-1} \sum_{y \in \partial a \cap V[r]} \tilde \sigma_y}
\end{align}
and
\begin{align}\label{def_unexplained} 
\vec U^j_x := \vec U^j_x(\SIGMA, \vec G) = \sum_{a \in \partial x \cap F[i + j]} \bc{\hat \SIGMA_a - \sum_{r=i-s+j+1}^{i-1} \sum_{y \in \partial a \cap V[r]} \SIGMA_y}.
\end{align}
Intuitively speaking, $\vU^j_x$ counts the number of items of label $1$ in tests adjacent to $x$ that are part of the same or \emph{later} compartments than $x$ with respect to the actually unknown ground-truth $\SIGMA$. In contrast, $\cU_x^j$ can be seen as an estimate of $\vec U^j_x$ that is based on $\tilde \sigma$ rather than the true $\SIGMA$. Indeed, if all labels in compartments $V[1], \ldots, V[i-1]$ have been decoded correctly, $\vU^j_x = \cU_x^j$. We call $\vU^j_x$  (or $\cU_x^j$) the \emph{unexplained neighbourhood sum} of item $x$ with respect to the tests in compartment $F[i+j]$. Compartmentwise consideration of neighbourhood sums is important here, as every item $x$ is a member of tests from different compartments, and during inference, tests from each compartment will have varying proportion of already inferred neighbours.

Next, let 
\begin{align}\label{def_sigma_Ex}
  \cE_x := \mathcal F\bc{\partial x, \SIGMA_x, (\vec k[r])_{r \in [\ell+s-1]}}  
\end{align}
be the $\sigma$-algebra generated by the tests that $x$ is part of, its label as well as the numbers of items with label $1$ in all compartments. 
We define an estimate of $\Erw\brk{\vec U_x^j \vert \cE_x} -  \vec \Delta_x[j]\SIGMA_x$, given $\partial x, (\vec k[r])_{r \in [\ell+s-1]}$, as
\begin{align}\label{def_cent}
    \mathcal M_x^j :=  %\vec\Delta_x[j]\frac{\vec k[i]}{|V[i]|}\bc{1-\frac{\Gamma/s-1}{|V[i]|-1}} +
    \vec\Delta_x[j] \sum_{r=0}^{j} \bc{\frac{\Gamma}{s}-\vecone\cbc{r=0}} \frac{\vec k[i+r]}{|V[i+r]\setminus \{x\}|}.
    \end{align}
Furthermore, define
\begin{align}\label{nbh_sums_alg}
    \cN_x^j := \frac{\cU_x^j - \mathcal M_x^j }{(j+1)\frac{\Delta}{s}\sqrt{\frac{k}{m}}} \quad \text{and} \quad \cN_x := \sum_{j=0}^{s-1} \cN_x^j,
\end{align}
and accordingly
\begin{align}\label{nbh_sums}
\vec N_x^j := \frac{\vec U_x^j - \Erw\brk{\vec U_x^j \vert \cE_x} + \vec \Delta_x[j]\SIGMA_x}{(j+1)\frac{\Delta}{s}\sqrt{\frac{k}{m}}} \quad \text{and} \quad \vec N_x := \sum_{j=0}^{s-1} \vec N_x^j.
\end{align}
Clearly, $\vec N_x^j$ is with respect to knowledge of the actual ground truth and is a standardised variant of $\vU^j_x$ which takes the influence of the label of item $x$ itself into account: if item $x$ has label $1$, it increases the unexplained neighbourhood sum deterministically by the number of adjacent tests in the compartment. The standardisation is necessary because the size of $\vec U_x^j$ dramatically increases for increasing $j$ such that the unexplained sums into different compartments would not be comparable. The normalised neighbourhood sum $\vec N_x$ finally is a weighted sum that takes care of the effect that the information in compartments close to $x$ are much more valuable: For $x \in V[i]$, only the items from $V[i]$ are unknown in compartment $F[i]$, whereas in $F[i+s-1]$ all items are unknown. Again, $\cN_x^j$ and $\cN_x$ are the corresponding quantities that are based on the estimate $\tilde \sigma$ instead of $\SIGMA$.

\subsection{The Spatially Coupled Inference through Efficient Neighbourhood Thresholding Algorithm} \label{SSec:InfAlg}
Our inference algorithm, called \algoname, now proceeds in a single stage. It will infer the labels of the items compartment by compartment, starting with compartment $V[s]$. 
Suppose therefore that the algorithm is about to infer the labels within compartment $V[i]$ for $i \in \{s, \ldots, s+\ell-1\}$, and that the current estimate of the true labels $\SIGMA$ is $\tilde \sigma$.

\algoname\hspace{0.01cm} then computes the score $\cN_x$ for all $x \in V[i]$ in parallel, using the current estimate $\tilde \sigma$, which encodes information from previously inferred compartments. Since both the pooling scheme as well as the number of label 1 items are known at this point thanks to the additional pools $A_s, \ldots, A_{s+\ell-1}$, all the information needed to compute $\cN_x$ is accessible to the algorithm.
\algoname\hspace{0.01cm}  will then declare any item with a score above a predetermined threshold $T_{\alpha}$ as having label $1$ and the others as having label $0$. This will lead to an update of $\tilde \sigma$ on the coordinates corresponding to compartment $V[i]$. Proceed now, step by step, with the following compartments until all items are labelled. An illustration and formal statement of \algoname\hspace{0.01cm} are given in Figure \ref{Fig:Thresh} and Algorithm \ref{Algo_statement}, respectively.
\vspace{-10pt}
\usetikzlibrary{decorations.pathreplacing}
\tikzset{mybrace/.style={decoration={brace,raise=1.8mm},decorate}}
\begin{figure}[h!]

\centering
\begin{tikzpicture}
\foreach \vertpos in {0}{
    \draw (0,\vertpos) -- (8,\vertpos);
    \foreach \pos/\descr in {0/0,4/{T_\alpha},8/{\Erw\brk{\vec N_x \vert \SIGMA_x=1}}}
        {
        \draw (\pos,\vertpos) -- ++(0,1mm);
        \draw (\pos,\vertpos) -- ++(0,-1mm);
        \node[yshift=-4mm] at (\pos,\vertpos) {$\descr$};
        }
}
\foreach \mycoord in {(0,1)}
     \draw [<-|,blue] \mycoord -- node[above, yshift=2mm,blue]{Set $\tilde \sigma_x = 0$ } ++(4,0); 

\foreach \mycoord in {(4,-1)}
     \draw [|->,red] \mycoord -- node[below, yshift=-2mm,red]{Set $\tilde \sigma_x = 1$ } ++(4,0); 

\end{tikzpicture}
\caption{Illustration of the threshold $T_\alpha$ for the \algoname\hspace{0.01cm} algorithm.}
\label{Fig:Thresh}
\end{figure}

\vspace{-30pt}
\begin{algorithm}
Set $\tilde \sigma = (\SIGMA',0) \in \cbc{0,1}^{n'+n}$; \\
\For{$i = s+1, \ldots, \ell+s-1$}{ 
    \For{any item $x \in V[i]$ calculate $\cN_x$}{
    \If{$\cN_x>T_{\alpha}$}{Set $\tilde \sigma_x = 1$;}}
}

Return $\tilde{\sigma}$.\\
\caption{\algoname\hspace{0.01cm} algorithm.}
\label{Algo_statement}
\end{algorithm}
\vspace{-20pt}
\section{Proof of Theorem 1} \label{Sec:Proof}
In this section, we prove Theorem \ref{thm_main_statement}. For now, all relevant quantities will be expressed in terms of parameters that we will fix only at the end of the proof. A succinct directory of the parameters and their definitions is contained in Appendix \ref{appendix_notation}.

For any $\delta>0$, the number $m$ of conducted measurements will be parameterised as
\begin{align} \label{mdelta}
    m:= 2  c_{\theta} (1+\delta) \bcfr{1-\theta}{\theta} k = (1+\delta) \cdot c_{\theta} \cdot \mdjackov^{\mathrm{inf}}. 
\end{align}
Thus, $c_{\theta}$ denotes the constant, but $\theta$-dependent multiplicative parameter that separates our design from the information-theoretic bound. 
For $\eps_{1}, \eps_2, \eps_{3} >0$ to be specified later (and that depend on $\delta, \theta$), let
\begin{align}
    \ell=  k^{1-\eps_2} = n^{\theta(1-\eps_2)}, \qquad s = k^{1-\eps_3} = n^{\theta(1-\eps_3)} \label{Eq:LS}
\end{align}
as well as
\[ \abovedisplayskip=10pt \Gamma = \frac{n^{1-\eps_{1}}}{2} +O(s) \]
be an integer divisible by $s$. 
This parametrisation of $\Gamma$ follows from a heuristic calculation which shows that in order to obtain a $c_{\theta}$ close to $1$, $\Gamma$ should be chosen close to $n/2$, if each test chooses $\Gamma$ items uniformly.

\subsection{Constraints on Parameters}\label{SSec:Constraints}
We next derive necessary constraints on the parameters for our general approach to work. 
Clearly, the sliding window $s$  should be asymptotically smaller than the number of compartments $\ell$. To ensure that $s = o(\ell)$ in the parametrisation \eqref{Eq:LS}, we assume throughout that
\begin{align}
    \eps_2<\eps_3. \label{Eq:Cond1}
\end{align}
Next, restricting the choice of the $\Gamma$ items for each pool to $s$ compartments of size $n/\ell$, where each item can only be chosen once for each test, yields the condition $\Gamma \leq sn/\ell$.

In terms of $\eps_1, \eps_2$ and $\eps_3$, this reduces to requiring that
\begin{align}
    %\frac{1}{2} \leq n^{\theta(\eps_2-\eps_3)+\eps_1}.
\theta(\eps_2-\eps_3)+\eps_1 \geq 0. \label{Eq:CondA}
\end{align}
Finally, for the neighbourhood sums to exhibit the necessary concentration properties, we additionally require that the average number of neighbours of each item within a specific pool compartment is growing: $\Delta/s = \omega(1)$. Hence, we desire that
\begin{align}
    \eps_3 > \frac{\eps_1}{\theta}. \label{Eq:CondB}
\end{align}

In the next subsection, we prove basic properties of the pooling scheme. For the reader's convenience, we fix a parametrisation of the hypergeometric distribution and list some well-known concentration inequalities as references in Appendix \ref{Appendix_inequalities}. Section \ref{SSec:PoolingScheme} collects some properties of the typical degrees in our spatial coupling scheme which follow immediately from those concentration inequalities.

\subsection{Properties of the Pooling Scheme}\label{SSec:PoolingScheme}
This section provides bounds on the degrees of the items in the pooling scheme as well as on the numbers of defective items per compartment. For this, let $i \in [\ell + s - 1]$ be a compartment index and $j \in [s-1]_0$. Recall that for $x \in V[i]$, $\vec \Delta_x[j]$ denotes the number of pools $a \in F[i+j]$ that $x$ is part of. Furthermore, we set
\begin{align*}
\vec \Delta_x = \sum_{j=0}^{s-1} \vec \Delta_x[j] \qquad \text{as well as} \qquad \Delta = \Erw \brk{\vec \Delta_x}.
\end{align*}
Throughout this section, we assume the parametrisation as specified in Section \ref{SSec:SpCoup}. Moreover, we will neglect errors that arise from the requirement that all involved quantities are integers, as these are asymptotically negligible. The following claim identifies the expected compartment-wise and overall item degrees:

\begin{claim}\label{claim_delta}
    We have
    \begin{align}
        \Delta = \frac{m}{2} n^{ - \eps_1} + o\bc{n^{\theta - \eps_1}} \quad \text{and} \quad \frac{\Delta}{s} = \frac{m}{2s} n^{ - \eps_1} + o\bc{n^{\theta \eps_3-\eps_1}}.
    \end{align}
\end{claim}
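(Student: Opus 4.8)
The plan is to compute $\Delta$ exactly by linearity of expectation over the independent pool-by-pool choices, and then to read off the stated asymptotics by substituting the parametrisation from \eqref{mdelta} and \eqref{Eq:LS}. Fix a bulk compartment index $i$ and an item $x \in V[i]$. For $j \in \{0,\ldots,s-1\}$, the pools counted by $\vec\Delta_x[j]$ are exactly those in $F[i+j]$, each of which selects its $\Gamma/s$ items from $V[i]$ uniformly without replacement and independently of the other pools. Hence, for a single pool $a \in F[i+j]$ we have $\Pr[x \in \partial a] = (\Gamma/s)/|V[i]|$, and since every $F$-compartment contains exactly $m/(\ell+s-1)$ pools (using that $m$ is divisible by $\ell+s-1$), linearity gives $\Erw[\vec\Delta_x[j]] = \tfrac{m}{\ell+s-1}\cdot\tfrac{\Gamma/s}{|V[i]|}$, which is the same for every $j$. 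Summing over the $s$ values of $j$ then yields the clean closed form
\[
\Delta = \Erw[\vec\Delta_x] = \frac{m}{\ell+s-1}\cdot\frac{\Gamma}{|V[i]|}.
\]

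Next I would insert the asymptotics of the three factors. Since $|V[i]| = n/\ell + O(1)$ we have $1/|V[i]| = (\ell/n)(1+O(\ell/n))$; since $s = o(\ell)$ by condition \eqref{Eq:Cond1} we have $m/(\ell+s-1) = (m/\ell)(1+O(s/\ell))$; and $\Gamma = n^{1-\eps_1}/2 + O(s)$ by construction. Multiplying these and cancelling the factor $\ell/n$ gives
\[
\Delta = \Big(\tfrac{m}{2}n^{-\eps_1} + O(ms/n)\Big)\big(1 + O(s/\ell) + O(\ell/n)\big).
\]
It then remains to check that every correction term is $o(n^{\theta-\eps_1})$. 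Using $m = \Theta(n^\theta)$, $\ell = n^{\theta(1-\eps_2)}$ and $s = n^{\theta(1-\eps_3)}$, the relative errors satisfy $s/\ell = n^{\theta(\eps_2-\eps_3)} = o(1)$ (again by \eqref{Eq:Cond1}) and $\ell/n = n^{\theta(1-\eps_2)-1} = o(1)$ (since $\theta<1$), so $\tfrac{m}{2}n^{-\eps_1}\cdot O(s/\ell + \ell/n) = o(n^{\theta-\eps_1})$; and the absolute term is $O(ms/n) = O(n^{2\theta-\theta\eps_3-1})$, which is $o(n^{\theta-\eps_1})$ provided $\eps_1 < 1-\theta(1-\eps_3)$, a bound that holds for the small $\eps_1$ we will eventually fix. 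This establishes the first equation $\Delta = \tfrac{m}{2} n^{-\eps_1} + o(n^{\theta-\eps_1})$.

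Finally, the second equation follows by dividing through by $s$: since $n^{\theta-\eps_1}/s = n^{\theta-\eps_1-\theta(1-\eps_3)} = n^{\theta\eps_3-\eps_1}$, the $o(n^{\theta-\eps_1})$ error turns into $o(n^{\theta\eps_3-\eps_1})$, yielding $\Delta/s = \tfrac{m}{2s}n^{-\eps_1} + o(n^{\theta\eps_3-\eps_1})$ as claimed. The calculation is elementary; the only care needed is the bookkeeping of the error terms, and the single structural fact that makes the dominant multiplicative correction $O(s/\ell)$ vanish is precisely the assumption $s = o(\ell)$ from \eqref{Eq:Cond1}. I therefore expect the ``obstacle,'' such as it is, to be purely the verification that all rounding and sliding-window corrections are dominated by the main term $\tfrac{m}{2} n^{-\eps_1}$ under the stated parameter ranges, rather than any conceptual difficulty.
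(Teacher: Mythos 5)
Your proposal is correct and follows essentially the same route as the paper: both compute $\Delta = \frac{\Gamma m}{(\ell+s-1)|V[i]|}$ exactly (the paper via the ratio $\binom{|V[i]|-1}{\Gamma/s-1}/\binom{|V[i]|}{\Gamma/s}=\frac{\Gamma}{s|V[i]|}$, you via the equivalent single-pool inclusion probability) and then substitute the parametrisation. The paper dismisses the final substitution with ``plugging in the parameters yields the claim,'' whereas you carry out the error bookkeeping explicitly; your verification that the corrections are $o(n^{\theta-\eps_1})$ is accurate (and your side condition $\eps_1 < 1-\theta(1-\eps_3)$ already follows from \eqref{Eq:CondB}).
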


\begin{proof}
Recall that $\Gamma$ is an integer divisible by $s$. Then
    \begin{align*}
        \Delta = \sum_{j=0}^{s-1} \Erw\brk{\vec \Delta_x[j]} = s \cdot \frac{m}{\ell+s-1} \cdot \frac{\binom{|V[i]|-1}{\Gamma/s-1}}{\binom{|V[i]|}{\Gamma/s}} = s \cdot \frac{m}{\ell+s-1} \cdot \frac{\Gamma}{s |V[i]|} = \frac{\Gamma m}{(\ell+s-1)|V[i]|}.
        \end{align*}
Plugging in the parameters from Section \ref{SSec:SpCoup} yields the claim.
\end{proof}

\begin{claim}\label{cor_conc_delta}
With probability $1 - o(n^{-2})$, %for all $i \in \{1, \ldots, \ell\}$ and all $j \in \{0, \ldots, s-1\}$,
\begin{align*}
    \frac{\Delta}{s} -  \sqrt{ \frac{\Delta}{s} } \log{n} \leq \min_{\substack{x \in V_{\mathrm{bulk}}, \hspace{0.05 cm} j \in [s-1]_0}} \vec \Delta_x[j] \leq \max_{x \in V_{\mathrm{bulk}}, \hspace{0.05 cm} j \in [s-1]_0} \vec \Delta_x[j] \leq  \frac{\Delta}{s} + \sqrt{ \frac{\Delta}{s}}\log{n}.
\end{align*}
\end{claim}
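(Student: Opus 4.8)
The plan is to recognise $\vec\Delta_x[j]$ as a sum of \emph{independent} indicator variables and then combine a Chernoff-type tail bound with a union bound over all relevant pairs $(x,j)$. First I would fix an item $x \in V[i]$ with $i \in [\ell+s-1]$ and an offset $j \in [s-1]_0$. By the ring structure, $V[i]$ is one of the $s$ source compartments of $F[i+j]$, so each of the $m/(\ell+s-1)$ pools $a \in F[i+j]$ draws exactly $\Gamma/s$ items from $V[i]$, uniformly without replacement and independently across pools. Hence the event $\{x \in a\}$ has probability $p = (\Gamma/s)/|V[i]|$, and these events are mutually independent over $a \in F[i+j]$. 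Consequently $\vec\Delta_x[j] \sim \Bin(m/(\ell+s-1), p)$, and by the computation underlying Claim \ref{claim_delta} (each of the $s$ summands there being equal) its mean is exactly $\Erw[\vec\Delta_x[j]] = \Delta/s$, neglecting integrality as stipulated.

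Given this, I would apply a Bernstein/Chernoff bound from Appendix \ref{Appendix_inequalities} with deviation $t = \sqrt{\Delta/s}\,\log n$ around the mean $\mu = \Delta/s$, yielding a tail of order $\exp(-t^2/(2(\mu + t/3)))$. The key quantitative check is that $\Delta/s = \omega(1)$; in fact $\Delta/s = \Theta(n^{\theta\eps_3 - \eps_1})$ grows polynomially by condition \eqref{Eq:CondB}, so $\sqrt{\mu} = n^{\Omega(1)} \gg \log n$, the linear term $\mu$ dominates $t/3$, and the exponent is of order $\log^2 n$. This gives a per-pair failure probability of $2n^{-\Omega(\log n)}$, i.e.\ super-polynomially small.

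Finally I would take a union bound over all $x \in V_{\mathrm{bulk}}$ (at most $n$ items) and all $j \in [s-1]_0$ (exactly $s \leq n$ offsets), a total of at most $ns \leq n^2$ pairs; multiplying the per-pair bound by $n^2$ still leaves $o(n^{-2})$, which is precisely the claimed probability. There is no genuine obstacle here, since this is a routine tail estimate; the only points requiring care are the independence claim across distinct pools within a compartment (which relies crucially on pools sampling their items independently) and the fact that the centre of the concentration window is the \emph{exact} mean $\Delta/s$, so the mean computation from Claim \ref{claim_delta} must be invoked verbatim rather than via its leading-order approximation.
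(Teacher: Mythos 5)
Your proposal is correct and follows essentially the same route as the paper: identify $\vec\Delta_x[j]$ as $\Bin\bc{m/(\ell+s-1),\Gamma/(s|V[i]|)}$ with mean $\Delta/s$, apply the Chernoff bound to get a tail of $\exp\bc{-\Theta(\ln^2 n)}$, and finish with a union bound over the at most $ns$ pairs $(x,j)$. Your additional verification that $\Delta/s$ grows polynomially via condition \eqref{Eq:CondB} is a detail the paper leaves implicit but is exactly the right check.
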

\begin{proof}
Each pool $a \in F[i+j]$ chooses $\Gamma/s$ items in compartment $V[i]$ uniformly at random and independently of the other pools. Thus,
\[\vec\Delta_x[j] \sim \Bin\bc{\frac{m}{\ell+s-1},\frac{\Gamma}{s|V[i]|}}.\]
Moreover, $\Erw\brk{\vec\Delta_x[j]} = \Delta/s$ and by the Chernoff bound (\Lem~\ref{lem_chernoff}),
\begin{align*} 
    \Pr\bc{ \abs{\vec \Delta_x[j] - \frac{\Delta}{s}} \geq  \sqrt{\frac{\Delta}{s}}\ln n} \leq \exp\bc{-\Theta\bc{\ln^2 n}}.
\end{align*}
The claim now follows from a union bound over all items $x$ and neighbouring compartments $j$.
\end{proof}

\begin{claim}\label{cor_conc_k}
With probability $1 - o(n^{-2})$, %for all $i \in \{1, \ldots, \ell\}$ and all $j \in \{0, \ldots, s-1\}$,
\begin{align*}
    \frac{k}{\ell} -   \sqrt{ \frac{k}{\ell} } \log{n} \leq \min_{i \in [\ell+s-1]} \vec k[i] \leq \max_{i \in [\ell+s-1]} \vec k[i] \leq  \frac{k}{\ell} + \sqrt{ \frac{k}{\ell} }\log{n}.
\end{align*}
\end{claim}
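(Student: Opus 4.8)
The plan is to mirror the proof of Claim~\ref{cor_conc_delta}, replacing the binomial tail estimate by the analogous bound for the hypergeometric distribution. The first step is to identify the marginal law of $\vec k[i]$ for each compartment. Since $\SIGMA$ is drawn uniformly among all vectors on $V_{\mathrm{bulk}}$ with exactly $k$ entries equal to $1$, fixing a bulk compartment $V[i]$ of size $|V[i]| \in \{\lfloor n/\ell\rfloor, \lceil n/\ell\rceil\}$ and counting its label-$1$ items is the same as drawing $|V[i]|$ items without replacement from a population of $n$ items containing $k$ ones. Hence
\[
   \vec k[i] \sim \Hyp\bc{n, k, |V[i]|} \qquad \text{for } i \in \{s,\ldots,\ell+s-1\},
\]
and the identical reasoning applied to $\SIGMA'$ gives $\vec k[i] \sim \Hyp(n', \vec k', |V[i]|)$ for the seed indices $i \in [s-1]$. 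In both cases $\Erw[\vec k[i]] = |V[i]|\,k/n = k/\ell + O(k/n)$ (using $\vec k'/n' = (k/n)(1+o(1))$ for the seed), and since $k/n = n^{\theta-1}$ while $\sqrt{k/\ell} = n^{\theta\eps_2/2}\to\infty$, the additive discrepancy $O(k/n)$ between $\Erw[\vec k[i]]$ and the centering value $k/\ell$ is negligible relative to the fluctuation scale we control. Note that for the union bound only these marginal laws are needed, so the multivariate (dependent) structure of the family $(\vec k[i])_i$ is irrelevant.

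The second step is to apply a Chernoff-type tail bound for the hypergeometric distribution (Appendix~\ref{Appendix_inequalities}); such a bound is available because the hypergeometric law is at least as concentrated as the binomial with the matching mean. Its variance satisfies
\[
   \Var\bc{\vec k[i]} = |V[i]|\,\frac{k}{n}\bc{1-\frac{k}{n}}\frac{n-|V[i]|}{n-1} = (1+o(1))\frac{k}{\ell},
\]
so the target deviation $\sqrt{k/\ell}\,\log n$ amounts to $(1+o(1))\log n$ standard deviations. Absorbing the negligible gap between $\Erw[\vec k[i]]$ and $k/\ell$ into the deviation, the bound yields
\[
   \Pr\bc{\abs{\vec k[i] - \frac{k}{\ell}} \geq \sqrt{\frac{k}{\ell}}\,\log n} \leq \exp\bc{-\Theta\bc{\log^2 n}}.
\]

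The final step is a union bound over all $\ell + s - 1 = O(n^{\theta})$ compartments. Since $\exp(-\Theta(\log^2 n))$ decays faster than any inverse power of $n$, its product with $O(n^{\theta})$ is $o(n^{-2})$, which simultaneously controls the minimum and the maximum over $i \in [\ell+s-1]$ and establishes the claim.

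The step I expect to require the most care is the concentration estimate itself: one must ensure that the standard Chernoff machinery transfers from the binomial to the hypergeometric setting. This is classical—the hypergeometric moment generating function is dominated by the binomial one—but it must be invoked correctly, and one should double-check that centering at $k/\ell$ rather than at the exact mean $\Erw[\vec k[i]]$ does not erode the exponent, which is precisely the content of the observation $k/n = o(\sqrt{k/\ell})$.
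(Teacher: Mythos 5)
Your proposal is correct and follows essentially the same route as the paper: identify $\vec k[i]$ as hypergeometric, apply the hypergeometric Chernoff bound from the appendix to get a tail of $\exp(-\Theta(\ln^2 n))$, and finish with a union bound over the $\ell+s-1$ compartments. Your explicit treatment of the seed compartments and of the $O(k/n)$ gap between $\Erw[\vec k[i]]$ and the centering value $k/\ell$ is slightly more careful than the paper's, but it is the same argument.
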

\begin{proof} 
The number $\vec k[i]$ of defective items in compartment $V[i]$ has distribution
\[\vec k[i] \sim \Hyp\bc{n,k,|V[i]|}.\]
Moreover, $\Erw\brk{\vec k[i]} = |V[i]|\cdot (k/n) = k/\ell + \Theta(n^{\theta-1})$ and by the Chernoff bound for the hypergeometric distribution (\Lem~\ref{Hyp}),
\begin{align*} 
    \Pr\bc{ \abs{\vec k[i] - \frac{k}{\ell}} \geq  \sqrt{\frac{k}{\ell}}\ln n} \leq \exp\bc{-\Theta\bc{\ln^2 n}}.
\end{align*}
The claim now follows from a union bound over all compartments $i \in [\ell+s-1]$.
\end{proof}

\subsection{Properties of Unexplained Neighbourhood Sums} \label{SSec:PropUnexpNbhd}
In this section, we analyse the idealised unexplained neighbourhood sum $\vec U_x$, conditioned on the two possible states of each item. This will enable us to define a sensible threshold for \algoname. Finally, we will derive concentration of $\vec N_x$, upon which the success of the algorithm relies.

\subsubsection{Distribution of Conditional Unexplained Neighbourhood Sums}
We first take a closer look at the distribution of the unexplained neighbourhood sum $\vec U_x^j$, as defined in \eqref{def_unexplained}. For this, recall from \eqref{def_sigma_Ex} that $\cE_x$ is the $\sigma$-algebra generated by the tests that $x$ is part of, its label and the numbers of items with label one in all compartments.

Let $i \in \{s, \ldots, \ell+s-1\}$ be the index of a non-artificial item compartment and $x \in V[i]$.
Conditioned on the information $\cE_x$, for each $j \in \{0, \ldots, s-1\}$, the remaining randomness in the neighbourhood sum $\vec U_x^{j}$ of $x$ is due to the numbers of defective items in the neighbouring tests of $x$ within compartment $F[i+j]$ that have not yet been explained. We model these numbers by a collection of independent hypergeometrically distributed random variables 
\begin{align}\label{def_hypgeo}
   \cbc{ \vH_{x,a}^{(r:j)}: j \in \{0, \ldots, s-1\}, r \in \{0, \ldots, j\}, a \in F[i+j]}, 
\end{align}
which are independent of $\vec\sigma_x$ and $(\vec k[i])_{i \in [\ell+s-1]}$. Here, $\vH_{x,a}^{(r:j)}$ represents the number of defective items that test $a \in \partial x \cap F[i+j]$ draws from item compartment $V[i+r]$, $r=0, \ldots, j$, excluding $x \in V[i]$. Thus,
\begin{align*}
    \vH_{x,a}^{(r:j)} \sim 
    \begin{cases}
     \Hyp \bc{|V[i]|-1, \vec k[i]-\SIGMA_x, \frac{\Gamma}{s}-1}, & r=0,\\
     \Hyp\bc{|V[i+r]|, \vec k[i+r], \frac{\Gamma}{s}}, & r \in [j].
    \end{cases}
\end{align*}
Here, as previously specified, the first parameter denotes the total size of the population, the second one the number of success states in the population, and the third one the number of draws. 
Finally, to render the subsequent expressions for the mean and variance of the (normalised) neighbourhood sums more readable, we will abbreviate the parameters of the hypergeometric distributions:
\begin{align}\label{def_par_hypgeo}
   \bc{N_x^{(r:j)}, \vec K_x^{(r:j)}, n_x^{(r:j)}} := 
    \begin{cases}
     \bc{|V[i]|-1, \vec k[i]-\SIGMA_x,\frac{\Gamma}{s}-1}, & r=0,\\
    \bc{|V[i+r]|, \vec k[i+r], \frac{\Gamma}{s}}, & r \in [j].
    \end{cases}
\end{align}

In the next lemma, we isolate the contribution of $x$ to its unexplained neighbourhood sum:

\begin{lemma}[Conditional distribution of $\vec U_x^j$]\label{cond_dist_U} 

Let $i \in \{s, \ldots, \ell+s-1\}$ be the index of an item compartment, $j \in \{0, \ldots, s-1\}$ a horizon within the sliding window, as well as $x \in V[i]$. 
Then with  $\{ \vH_{x,a}^{(r:j)}\}$ as defined in (\ref{def_hypgeo}),
\begin{align}\label{cond_distr_U}
    \vec U_x^j \stackrel{d}{=} \vec \Delta_x[j] \SIGMA_x +  \sum_{a \in \partial x \cap F[i+j]}\sum_{r=0}^{j} \vec H_{x,a}^{(r:j)} \qquad \qquad  \text{given } \cE_x.
\end{align}
\end{lemma}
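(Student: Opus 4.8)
The plan is to first perform a purely combinatorial simplification of $\vec U_x^j$ using the support structure of the pooling scheme, and only then to read off the conditional law of the resulting counts. Recall that a pool $a \in F[i+j]$ draws its items from the compartments $V[i+j-s+1], \dots, V[i+j]$, with exactly $\Gamma/s$ items from each. Since $\hat\SIGMA_a = \sum_{y \in \partial a} \SIGMA_y$, the inner expression in \eqref{def_unexplained} equals $\hat\SIGMA_a$ minus the label-$1$ count of $a$ restricted to the compartments $V[i-s+j+1], \dots, V[i-1]$. As the lower endpoint $i-s+j+1$ coincides with $i+j-s+1$, these are precisely the compartments of $a$ lying strictly before $V[i]$, so the subtraction cancels them exactly and leaves
\begin{align*}
  \vec U_x^j = \sum_{a \in \partial x \cap F[i+j]} \sum_{r=0}^{j} \sum_{y \in \partial a \cap V[i+r]} \SIGMA_y,
\end{align*}
the total number of label-$1$ items that the pools $a \in \partial x \cap F[i+j]$ draw from compartments $V[i], \dots, V[i+j]$. (When $j=s-1$ the subtracted range is empty, which is consistent with this identity.)

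Next I would isolate the contribution of $x$ itself. For every pool $a \in \partial x \cap F[i+j]$ we have $x \in \partial a \cap V[i]$, so the $r=0$ summand contains the term $\SIGMA_x$; summing over the $\vec\Delta_x[j]$ such pools produces the deterministic term $\vec\Delta_x[j]\SIGMA_x$. The remaining items that $a$ contributes in compartment $V[i]$ are the $\Gamma/s-1$ items drawn from $V[i]\setminus\{x\}$, while for $r \in \{1,\dots,j\}$ the pool contributes all $\Gamma/s$ items drawn from $V[i+r]$. Writing $\vec H_{x,a}^{(r:j)}$ for the number of label-$1$ items among these draws, the simplified identity becomes exactly $\vec\Delta_x[j]\SIGMA_x + \sum_{a}\sum_{r=0}^j \vec H_{x,a}^{(r:j)}$, so it remains to verify that, conditionally on $\cE_x$, this collection obeys the independent hypergeometric law claimed in \eqref{def_hypgeo}.

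For the distributional identification I would condition on a $\sigma$-algebra finer than $\cE_x$, namely one that additionally fixes the actual set of label-$1$ items in each compartment (so all $\vec k[r]$, and in particular the membership of $x$, are determined, matching $\SIGMA_x$). Under this finer conditioning each pool draws its $\Gamma/s$ items (respectively its $\Gamma/s-1$ items from $V[i]\setminus\{x\}$, after conditioning on $x \in \partial a$) uniformly without replacement, independently across pools and across compartments by construction; hence each count is hypergeometric with the parameters listed in \eqref{def_par_hypgeo} and the counts are mutually independent. The crucial point, and the step I expect to require the most care, is to argue that this conditional law in fact depends only on the $\cE_x$-measurable data $\bc{\partial x, \SIGMA_x, (\vec k[r])_{r}}$: by exchangeability of the items within each compartment, the joint law of the draw-counts is invariant under relabelling which items carry label $1$, so it is a function of the cardinalities $\vec k[r]$ and of $\SIGMA_x$ alone. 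Averaging the finer conditional law over the admissible label positions therefore leaves it unchanged, which yields the conditional law given $\cE_x$ and completes the proof. One should also check that conditioning on $\partial x$ rather than the full graph does not perturb the draws of the neighbouring pools outside $V[i]$, which follows at once since draws in distinct compartments are independent.
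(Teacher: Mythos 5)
Your proposal is correct and follows essentially the same route as the paper: the same cancellation of the already-explained compartments to obtain $\vec U_x^j = \sum_{a}\sum_{r=0}^{j}\sum_{y \in \partial a \cap V[i+r]}\SIGMA_y$, the same isolation of the deterministic term $\vec\Delta_x[j]\SIGMA_x$, and the same identification of the remaining counts as independent hypergeometrics with the parameters of \eqref{def_par_hypgeo}. Your extra step of conditioning on the full label configuration and invoking exchangeability to reduce to the cardinalities $\vec k[r]$ is a careful elaboration of what the paper asserts directly, not a different argument.
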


\begin{proof}
Recall that $\vec U_x^j$ is the unexplained neighbourhood sum of item $x$ in compartment $i+j$ under perfect knowledge of the true labels in the first $i-1$ compartments. In this sense, \eqref{def_unexplained} can equivalently be written as 
\begin{align}\label{eq_Uxj}
\vec U_x^j = \sum_{a \in \partial x \cap F[i + j]} \sum_{r=0}^{j} \sum_{y \in \partial a \cap V[i+r]} \SIGMA_y.
    \end{align}
Next, we isolate the contribution of $x$  to $\vec U_x^j$. Since this is of size $\vec \Delta_x[j] \SIGMA_x$, we obtain
\begin{align}
\vec U_x^j = \vec \Delta_x[j] \SIGMA_x + \sum_{a \in \partial x \cap F[i + j]} \sum_{r=0}^{j} \sum_{y \in \partial a \cap (V[i+r]\setminus \{x\})} \SIGMA_y.
    \end{align}
Looking at the right hand side, it thus remains to argue that given $\cE_x$, the inner sum is indeed distributed as a sum of independent hypergeometrically distributed random variables with the specified parameters. For this, recall that each test $a \in \partial x \cap F[i + j]$ chooses its neighbourhood within compartment $V[i+r]$ without replacement, independently from the other tests and its choices in other compartments. In particular, given $\cE_x$, for each $a \in \partial x \cap F[i+j]$, the remaining neighbours $y \in \partial a \cap (V[i]\setminus \{x\})$ from compartment $V[i]$ 
are also drawn uniformly without replacement. 
Moreover, given $\cE_x$, also the numbers $\vec k[s], \ldots, \vec k[\ell + s - 1]$ of defective items in each compartment are known, and thus for each $a \in \partial x \cap F[i+j]$ and $r \in \{0, \ldots, j\}$, the contribution $\sum_{y \in \partial a \cap (V[i+r]\setminus \{x\})} \SIGMA_y$ to $\vec U_x^j$ is hypergeometrically distributed. The parameters of the distribution are the total number of items in compartment $V[i+r]$ excluding $x$, the number of defective items in compartment $V[i+r]$ excluding $x$ and the number of items that each pool $a \in F[i+j]$ draws from item compartment $V[i+r]$ excluding $x$.
Plugging in the numbers yields $(N_x^{(r:j)}, \vec K_x^{(r:j)}, n_x^{(r:j)})$ from (\ref{def_par_hypgeo}).
%This establishes the claim. 
\end{proof}

\begin{corollary}[Conditional mean and variance of $\vec U_x^j$] \label{cor_cond_exp_pd}
Let $i \in \{s, \ldots, \ell+s-1\}$ be the index of an item compartment, $j \in \{0, \ldots, s-1\}$ a horizon within the sliding window, as well as $x \in V[i]$. 
 Then, with $(N_x^{(r:j)}, \vec K_x^{(r:j)},n_x^{(r:j)})$ as in (\ref{def_par_hypgeo}),
\begin{align} \label{eq_expectation_ux} 
\Erw \brk{ \vec U_x^j \Big \vert \cE_x} & = \vec\Delta_x[j]\SIGMA_x +\vec\Delta_x[j] \sum_{r=0}^{j} n_x^{(r:j)} \frac{\vec K_x^{(r:j)}}{N_x^{(r:j)}}
%\bc{\frac{\Gamma}{s}-\vecone\cbc{r=0}} \frac{\vec k[i+r]- \vecone\cbc{r=0}\SIGMA_x }{|V[i+r]|-\vecone\cbc{r=0}}
% Previous: \Erw \brk{ \vec U_x^j \Big \vert \cE_x} & = \vec\Delta_x[j]\SIGMA_x\bc{1-\frac{\Gamma/s-1}{|V[i]|-1}} +\vec\Delta_x[j] \sum_{r=0}^{j} \bc{\frac{\Gamma}{s}-\vecone\cbc{r=0}} \frac{\vec k[i+r]}{|V[i+r]\setminus \{x\}|}
\end{align}
and
\begin{align*}
    \Var \bc{\vec U_x^j \Big \vert \cE_x}
    & = \vec \Delta_x[j]\sum_{r=0}^{j}  n_x^{(r:j)} \frac{\vec K_x^{(r:j)}}{N_x^{(r:j)}}\bc{1 -\frac{\vec K_x^{(r:j)}}{N_x^{(r:j)}}}\bc{1 - \frac{n_x^{(r:j)}-1}{N_x^{(r:j)}-1}}.
  %  \bc{\frac{\Gamma}{s}-\vecone\cbc{r=0}} \frac{\vec k[i+r]- \vecone\cbc{r=0}\SIGMA_x }{|V[i+r]|-\vecone\cbc{r=0}} \\ 
  %  & \qquad \cdot \bc{1 -\frac{\vec k[i+r]- \vecone\cbc{r=0}\SIGMA_x }{|V[i+r]|-\vecone\cbc{r=0}}}\bc{1 -\frac{\frac{\Gamma}{s}-1-\vecone\cbc{r=0}  }{|V[i+r]|-1-\vecone\cbc{r=0}}}.
   % \frac{\Gamma \ell^3 (n/\ell - \Gamma/s)}{s n^3}\bc{\frac{(1-s/\Gamma)}{(1-\ell/n)^3}(\vec k[i] - \SIGMA_x)(n/\ell -1 -\vec k[i] + \SIGMA_x) + \sum_{r=i+1}^{i+j}\vec k[r](n/\ell - \vec k[r])}
 % Previously   & = \vec \Delta_x[j]\frac{\Gamma \ell^3 (n/\ell - \Gamma/s)}{s n^3}\bc{\frac{(1-s/\Gamma)}{(1-\ell/n)^3}(\vec k[i] - \SIGMA_x)(n/\ell -1 -\vec k[i] + \SIGMA_x) + \sum_{r=i+1}^{i+j}\vec k[r](n/\ell - \vec k[r])}
    \end{align*}
\end{corollary}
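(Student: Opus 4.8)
The goal is to establish the conditional mean and variance of $\vec U_x^j$ given $\cE_x$. Looking at Lemma \ref{cond_dist_U}, we have the distributional identity
$$\vec U_x^j \stackrel{d}{=} \vec \Delta_x[j] \SIGMA_x + \sum_{a \in \partial x \cap F[i+j]} \sum_{r=0}^{j} \vec H_{x,a}^{(r:j)}$$
conditioned on $\cE_x$. The key structural facts we may invoke are: (i) conditioned on $\cE_x$, the quantities $\vec \Delta_x[j]$ and $\SIGMA_x$ are measurable (they generate part of $\cE_x$), so the term $\vec \Delta_x[j] \SIGMA_x$ is a constant under the conditioning; and (ii) the collection $\{\vec H_{x,a}^{(r:j)}\}$ consists of \emph{independent} hypergeometric random variables with the parameters recorded in \eqref{def_par_hypgeo}.

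Let me think about whether the parameters themselves are $\cE_x$-measurable. The parameters $(N_x^{(r:j)}, \vec K_x^{(r:j)}, n_x^{(r:j)})$ involve $|V[i+r]|$ (deterministic), $\vec k[i+r]$ and $\SIGMA_x$ — all of which are measurable with respect to $\cE_x$, since $\cE_x$ is generated by $\partial x$, $\SIGMA_x$, and the compartment counts $(\vec k[r])_r$. So conditioned on $\cE_x$, these parameters are fixed numbers, and each $\vec H_{x,a}^{(r:j)}$ is a genuine hypergeometric variable with known parameters.

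So the plan is straightforward. For the mean, I would apply linearity of expectation to the right-hand side of \eqref{cond_distr_U}. The deterministic term contributes $\vec \Delta_x[j]\SIGMA_x$. For the double sum, the expectation of a single $\Hyp(N,K,n)$ variable is $n K/N$, which here gives $n_x^{(r:j)} \vec K_x^{(r:j)}/N_x^{(r:j)}$. Crucially, this summand does \emph{not} depend on the index $a$ — the hypergeometric parameters are identical across all tests $a \in \partial x \cap F[i+j]$ — so the sum over $a$ simply multiplies by $|\partial x \cap F[i+j]| = \vec \Delta_x[j]$. This yields
$$\Erw\brk{\vec U_x^j \mid \cE_x} = \vec \Delta_x[j]\SIGMA_x + \vec \Delta_x[j]\sum_{r=0}^j n_x^{(r:j)} \frac{\vec K_x^{(r:j)}}{N_x^{(r:j)}},$$
matching \eqref{eq_expectation_ux}. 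For the variance, I would use that conditioned on $\cE_x$ the summands $\vec H_{x,a}^{(r:j)}$ are mutually independent (both across $a$ and across $r$), so the variance of the double sum equals the sum of the individual variances, and the constant term $\vec \Delta_x[j]\SIGMA_x$ drops out. The variance of a single $\Hyp(N,K,n)$ variable is the standard formula $n \frac{K}{N}\bc{1-\frac{K}{N}}\frac{N-n}{N-1}$; substituting the parameters and observing once more that the summand is independent of $a$, the sum over $a$ again contributes a factor $\vec \Delta_x[j]$, giving exactly the stated expression.

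The only genuine point requiring care — and the one I would treat as the main (if modest) obstacle — is justifying the \emph{independence} across the different tests $a$ and across the horizons $r$, which underwrites both the identical-summand collapse for the mean and, more essentially, the additivity of variances. This is asserted in the setup around \eqref{def_hypgeo} (the family is declared independent) and follows from the construction: each test chooses its neighbours in each compartment independently of other tests and of its choices in other compartments. I would simply cite Lemma \ref{cond_dist_U} and the independence stipulated in \eqref{def_hypgeo}, then invoke the textbook hypergeometric mean and variance formulas. Everything else is routine algebra, so the corollary follows immediately once the independence is in hand.
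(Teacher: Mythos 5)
Your proposal is correct and follows exactly the route the paper takes: the paper's proof is a one-line appeal to Lemma \ref{cond_dist_U} and the independence of the hypergeometric family in \eqref{def_hypgeo}, and your argument simply spells out the resulting linearity-of-expectation and additivity-of-variance computation with the standard hypergeometric moment formulas. No gaps.
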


\begin{proof}
This is an immediate consequence of the characterisation  of the conditional distribution of $\vec U_x^j$ from Lemma \ref{cond_dist_U}, and the independence of hypergeometrics in (\ref{def_hypgeo}).
    \end{proof}

We finally use Corollary \ref{cor_cond_exp_pd} to derive the expectation and variance of the centered and normalised neighbourhood sums, which will form the score of an item under perfect knowledge of all previous compartments. For this, recall $\vec N_x^{j}$ as defined in \eqref{nbh_sums}.
\begin{corollary}[Conditional mean and variance of $\vec N_x^j$]\label{cor_parameters_N}
Let $i \in \{s, \ldots, \ell+s-1\}$ be the index of an item compartment, $j \in \{0, \ldots, s-1\}$ a horizon within the sliding window, as well as $x \in V[i]$. 
 Then
\begin{align*}
  \Erw \brk{\vec N_x^j \Big \vert \cE_x} = \frac{\vec\Delta_x[j]\SIGMA_x}{\frac{\Delta}{s}(j+1)\sqrt{\frac{k}{m}}} \quad \text{and} \quad \Var \bc{\vec N_x^j \Big \vert \cE_x}  = \frac{\Var \bc{\vec U_x^j \Big \vert \cE_x}}{\bc{\frac{\Delta}{s}}^2(j+1)^2\frac{k}{m}} .
\end{align*}
In particular, since $\Erw[ \vec \Delta_x[j] \vert \SIGMA_x] \cdot \SIGMA_x = \Delta/s \cdot \SIGMA_x$,
\begin{align*}
    \Erw\brk{\vec N_x^j \vert \SIGMA_x=0} = 0 \qquad \text{and} \qquad \Erw\brk{\vec N_x^j \vert \SIGMA_x=1} = \frac{1}{(j+1)\sqrt{\frac{k}{m}}}.
\end{align*}
\end{corollary}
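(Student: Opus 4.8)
The plan is to treat $\vec N_x^j$ as an affine transformation of $\vec U_x^j$ whose additive offset and multiplicative scaling are either $\cE_x$-measurable or outright deterministic, so that both the conditional mean and variance drop out of Corollary~\ref{cor_cond_exp_pd} together with the elementary behaviour of conditional expectation and variance. First I would record the two structural observations that drive everything: in the definition \eqref{nbh_sums}, the denominator $(j+1)\frac{\Delta}{s}\sqrt{k/m}$ is a non-random constant, since $\Delta$, $k$, $m$, $s$ and $j$ are all deterministic; and in the numerator $\vec U_x^j-\Erw[\vec U_x^j\mid\cE_x]+\vec\Delta_x[j]\SIGMA_x$, both the centring term $\Erw[\vec U_x^j\mid\cE_x]$ and the correction $\vec\Delta_x[j]\SIGMA_x$ are $\cE_x$-measurable, because $\cE_x$ is generated precisely by $\partial x$ (which determines $\vec\Delta_x[j]$), the label $\SIGMA_x$, and the compartment counts $(\vec k[r])_r$.

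For the conditional mean I would push $\Erw[\,\cdot\mid\cE_x]$ through the numerator: the two $\cE_x$-measurable terms pass through unchanged, the two copies of $\Erw[\vec U_x^j\mid\cE_x]$ cancel, and only $\vec\Delta_x[j]\SIGMA_x$ survives over the deterministic denominator, which is exactly the first displayed identity. For the conditional variance I would use that shifting by an $\cE_x$-measurable quantity leaves the conditional variance invariant, while dividing by a deterministic constant scales the variance by its square; this immediately gives the ratio $\Var(\vec U_x^j\mid\cE_x)$ over $(\Delta/s)^2(j+1)^2(k/m)$. No new estimate is required here, only the formal identities $\Erw[Y+c\mid\cE_x]=\Erw[Y\mid\cE_x]+c$ and $\Var(Y+c\mid\cE_x)=\Var(Y\mid\cE_x)$ for $\cE_x$-measurable $c$.

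For the two ``in particular'' statements I would apply the tower property $\Erw[\vec N_x^j\mid\SIGMA_x]=\Erw[\Erw[\vec N_x^j\mid\cE_x]\mid\SIGMA_x]$ and substitute the conditional mean just obtained, so that the problem reduces to computing $\Erw[\vec\Delta_x[j]\mid\SIGMA_x]$. The key input is the identity $\Erw[\vec\Delta_x[j]\mid\SIGMA_x]=\Delta/s$, which holds because the random bipartite graph $\vec G$ is sampled independently of the ground truth $\SIGMA$; hence $\vec\Delta_x[j]$ is independent of $\SIGMA_x$ and its conditional mean equals the unconditional value $\Delta/s$ from Claim~\ref{claim_delta}. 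Plugging in yields $\Erw[\vec N_x^j\mid\SIGMA_x]=\SIGMA_x/\bc{(j+1)\sqrt{k/m}}$, which specialises to $0$ and $1/\bc{(j+1)\sqrt{k/m}}$ for $\SIGMA_x=0$ and $\SIGMA_x=1$, respectively.

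The argument is entirely mechanical, so there is no genuine analytic obstacle. The only point demanding care is bookkeeping which quantities are $\cE_x$-measurable, which are deterministic, and which are random: in particular, one must recognise that conditioning on $\SIGMA_x$ does \emph{not} bias the degree $\vec\Delta_x[j]$, precisely because the pooling design and the labels are drawn independently. Once this independence is made explicit, the result follows without further work.
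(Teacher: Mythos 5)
Your proposal is correct and matches the paper's (implicit) argument: the paper states Corollary~\ref{cor_parameters_N} without a separate proof, treating it as an immediate consequence of Corollary~\ref{cor_cond_exp_pd} together with the definition \eqref{nbh_sums}, which is precisely your affine-transformation bookkeeping of $\cE_x$-measurable offsets and a deterministic normalisation. Your explicit justification of $\Erw[\vec\Delta_x[j]\mid\SIGMA_x]=\Delta/s$ via independence of the pooling graph and the ground truth is exactly the point the paper flags in its ``in particular'' clause.
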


\subsubsection{Choosing a Threshold}
Going from compartment-wise considerations to the overall score $\vec N_x$ of $x$, Corollary \ref{cor_parameters_N} yields that 
\begin{align*}
    \Erw\brk{\vec N_x \vert \SIGMA_x=0} = 0 \qquad \text{and} \qquad \Erw\brk{\vec N_x \vert \SIGMA_x=1} = \sqrt{\frac{m}{k}}\sum_{j=0}^{s-1} \frac{1}{j+1}.
\end{align*}
Correspondingly, a healthy item should have a score $\vec N_x$ around $0$, while a defective item should have a score of size about $\sqrt{\frac{m}{k}} \ln s$,  taking into account the asymptotics of the harmonic sum. 
Let $\alpha \in (0,1)$. To reliably discern between healthy and defective individuals, we define the parametrised threshold
\begin{align} \label{threshold ansatz}
    T_{\alpha} = \alpha \cdot \sqrt{\frac{m}{k}} \ln s,
    \end{align}
which is asymptotically in between $\Erw\brk{\vec N_x \vert \SIGMA_x=0}$ and $\Erw\brk{\vec N_x \vert \SIGMA_x=1}$. Our approach in the next subsection will be to classify each item with a score below $T_\alpha$ as healthy and each item with score above $T_\alpha$ as defective, and to evaluate the probability of errors under this classification.

\subsubsection{Concentration of $\mathbf{N}_x$} 

The success of the thresholding algorithm chiefly depends on concentration properties of the normalised neighbourhood sums. These are the content of the following lemma and will ensure small enough error probabilities to beat the union bounds, which we cannot do without in the proof of our main theorem. 

\begin{lemma}\label{Lem_conc}
Let $T_\alpha$ be defined as in \eqref{threshold ansatz} and $x \in V[i]$ for $i \in \{s, \ldots, \ell+s-1\}$. 
Then
\small
\begin{align*}
    \Pr\bc{\vec N_x \geq T_{\alpha} \big \vert \SIGMA_x=0} & \leq s^{-(1+o(1)) \frac{\alpha^2}{2}\cdot \frac{m}{k}} + o(n^{-2})\\
    %\exp\bc{-(1+o(1)) \frac{2c_{\theta}\alpha^2  \bc{1-\theta}\bc{1+\delta}k }{2\theta k} \ln s} + o(n^{-2}), \\
    \Pr\bc{\vec N_x \leq T_\alpha \big \vert \SIGMA_x=1}  & \leq  s^{-(1+o(1))\frac{(1-\alpha)^2}{2}\cdot \frac{m}{k}}+ o\bc{n^{-2}}.
    %\exp\bc{-(1+o(1)) \frac{2c_{\theta} (1-\alpha)^2 \bc{1-\theta} \bc{1+\delta}k }{2\theta k} \ln s}+ o\bc{n^{-2}}.
    \end{align*}
\normalsize
\end{lemma}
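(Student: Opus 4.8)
The plan is to condition on the $\sigma$-algebra $\cE_x$ and exploit the conditional representation of Lemma \ref{cond_dist_U}, which exhibits $\vec U_x^j$ (hence $\vec N_x$) as a deterministic shift plus a sum of independent hypergeometrics; both tail bounds then reduce to a Chernoff estimate, once the conditional mean and variance are pinned down and the relevant deviation is shown to sit in the Gaussian regime.

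First I would restrict to the typical event $\cK$ on which the degrees $\vec\Delta_x[j]$ and the compartment counts $\vec k[r]$ lie within the windows of Claims \ref{cor_conc_delta} and \ref{cor_conc_k}. Since $\Pr(\cK^c)=o(n^{-2})$, the failure of $\cK$ contributes exactly the additive $o(n^{-2})$ in both displayed bounds, so it suffices to control the conditional probabilities given $\cE_x$ on $\cK$. On $\cK$, Corollary \ref{cor_parameters_N} yields $\Erw[\vec N_x\mid\cE_x]=0$ when $\SIGMA_x=0$ and $\Erw[\vec N_x\mid\cE_x]=(1+o(1))\sqrt{m/k}\,\ln s$ when $\SIGMA_x=1$ (using $\vec\Delta_x[j]=(1+o(1))\Delta/s$ and the harmonic-sum asymptotics), while Corollary \ref{cor_cond_exp_pd} together with the chosen parameter values gives $v:=\Var(\vec N_x\mid\cE_x)=\sum_{j}\Var(\vec N_x^j\mid\cE_x)=(1+o(1))\ln s$. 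The variances add because the $\vec N_x^j$ are conditionally independent, living on the disjoint test compartments $F[i+j]$, and the independence of the hypergeometrics in \eqref{def_hypgeo}.

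Second, for the tails themselves I would use the exponential method. Writing $\vec N_x-\Erw[\vec N_x\mid\cE_x]=\sum_{j,a,r}c_j(\vH_{x,a}^{(r:j)}-\Erw\vH_{x,a}^{(r:j)})$ with $c_j=((j+1)(\Delta/s)\sqrt{k/m})^{-1}$, I bound $\Erw[e^{\lambda(\vec N_x-\Erw[\vec N_x\mid\cE_x])}\mid\cE_x]=\prod_{j,a,r}\Erw[e^{\lambda c_j(\vH-\Erw\vH)}]$ by expanding each hypergeometric log-MGF as $\tfrac{\lambda^2c_j^2}{2}\Var(\vH)(1+o(1))$. This is valid because at the optimal $\lambda=\Theta(\sqrt{m/k})$ one has $\lambda c_j=\Theta\!\big((m/k)/((j+1)\Delta/s)\big)\to 0$, which is precisely the growth condition \eqref{Eq:CondB} that $\Delta/s=\omega(1)$. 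Optimising over $\lambda$ then gives $\Pr(\vec N_x-\Erw[\vec N_x\mid\cE_x]\ge t\mid\cE_x)\le\exp(-\tfrac{t^2}{2v}(1+o(1)))$. Taking $t=T_\alpha$ in the healthy case and $t=\Erw[\vec N_x\mid\cE_x]-T_\alpha=(1-\alpha+o(1))\sqrt{m/k}\,\ln s$ in the defective case, and substituting $v=(1+o(1))\ln s$ and $T_\alpha=\alpha\sqrt{m/k}\ln s$, yields the two exponents $\tfrac{\alpha^2}{2}\tfrac mk\ln s$ and $\tfrac{(1-\alpha)^2}{2}\tfrac mk\ln s$, i.e.\ the stated powers of $s$.

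The main obstacle is this last step: the per-term estimate must genuinely land in the Gaussian regime. A naive Bernstein bound using the hard maximum $\vH\le\vec k[i]\approx k/\ell$ fails, since $c_j\cdot(k/\ell)$ need not vanish under \eqref{Eq:CondA}; the correct point is that the relevant scale in the log-MGF is the unit jump of the underlying $\Bin(\cdot,k/n)$-type count, so the correction is governed by $\lambda c_j$ alone, which \emph{does} vanish by \eqref{Eq:CondB}. I would therefore establish (or invoke from Appendix \ref{Appendix_inequalities}) a hypergeometric MGF estimate of the form $\log\Erw e^{t(\vH-\Erw\vH)}=\tfrac{t^2}{2}\Var(\vH)(1+o(1))$ as $t\to 0$, carrying the \emph{true} hypergeometric variance so that the finite-population correction already recorded in Corollary \ref{cor_cond_exp_pd} is preserved and the constant in the exponent is exact up to the $(1+o(1))$ factor.
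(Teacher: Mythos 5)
Your proposal is correct and follows the same skeleton as the paper's proof: condition on $\cE_x$, invoke Lemma \ref{cond_dist_U}, restrict to the typical event of Claims \ref{cor_conc_delta} and \ref{cor_conc_k} (which supplies the additive $o(n^{-2})$), compute the conditional mean and the variance $(1+o(1))\ln s$, and then show the deviation sits in the Gaussian regime because the effective increment scale is the \emph{rescaled unit jump} $c_j=\frac{s}{(j+1)\Delta}\sqrt{m/k}$ rather than the full range of a hypergeometric --- you correctly identify this as the crux and tie it to condition \eqref{Eq:CondB}. The one place you diverge is the final concentration tool. You keep the hypergeometrics whole and bound the product of their moment generating functions via a small-$t$ expansion of the log-MGF; the paper instead decomposes each $\vec H_{x,a}^{(r:j)}$ into its $n_x^{(r:j)}$ draw-by-draw Bernoulli indicators, observes that these are negatively associated within a pool--compartment pair and independent across pairs, and applies the packaged Bernstein inequality for negatively associated variables (Theorem \ref{Bernstein_Negative}) with $z=\max_j c_j$ and $\frac{2}{3}T_\alpha z=o(1)$. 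These are two implementations of the same idea: the hypergeometric MGF domination you would need (e.g.\ Hoeffding's comparison of the hypergeometric to the binomial, $\log\Erw\,\eul^{t(\vec H-\Erw\vec H)}\le(1+o(1))\frac{t^2}{2}np(1-p)$ for $t\to0$) is not in the paper's appendix and rests on essentially the same negative-association/sampling-without-replacement fact the paper exploits, so you would have to supply it, but it is classical and there is no gap. Your final worry about preserving the exact finite-population correction in the variance is moot here: with the parameter choices one has $n_x^{(r:j)}/N_x^{(r:j)}=\Gamma\ell/(ns)=o(1)$, so the correction factor is $1-o(1)$, and indeed the paper itself only uses the cruder binomial-type bound $\Var(\vec X_{x,a,t}^{(r:j)}\vert\cE_x)\le\vec p_x^{(r:j)}$ in \eqref{var_est}. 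The paper's route buys a shorter write-up by citing a ready-made inequality; yours is more self-contained at the MGF level but requires proving one auxiliary estimate.
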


\begin{proof}[Proof of Lemma \ref{Lem_conc}]
Recall the random variables $\{\vec H_{x,a}^{(r:j)}\}$ defined in \eqref{def_hypgeo}. Lemma \ref{cond_dist_U} implies that conditionally on $\cE_x$, 
  \begin{align}\label{eq_conc_1}
      \vec N_x \stackrel{d}{=} \sum_{j=0}^{s-1} \frac{\vec \Delta_x[j] \SIGMA_x}{\frac{\Delta}{s}(j+1)\sqrt{\frac{k}{m}}} + \sum_{j=0}^{s-1} \sum_{a \in \partial x \cap F[i+j]} \sum_{r=0}^{j} \frac{\vec H_{x,a}^{(r:j)} - \Erw[\vec H_{x,a}^{(r:j)} \vert \cE_x]}{\frac{\Delta}{s}(j+1)\sqrt{\frac{k}{m}}}.
  \end{align}
The distributional identity (\ref{eq_conc_1}) shows that given $\cE_x$, $\vec N_x$ is distributed as a constant plus a sum of independent, centered and rescaled hypergeometric random variables. Correspondingly, we will first adopt a point of view on the sum of the hypergeometric random variables that allows to apply a classical concentration inequality to them.
In the remainder of this proof, we abbreviate
\begin{align}\label{eq_mux}
      \vec \mu_x = \sum_{j=0}^{s-1} \frac{\vec \Delta_x[j]}{\frac{\Delta}{s}(j+1)\sqrt{\frac{k}{m}}}.
  \end{align}

For this, we represent each hypergeometric random variable $\vec H_{x,a}^{(r:j)}$ as the sum over $n_x^{(r:j)}$ negatively associated indicator random variables:
Given $\cE_x$, for any $j \in \{0, \ldots, s-1\}, a \in \partial x \cap F[i+j], r \in \{0, \ldots, i\}$, let
\begin{align}
    \vec X_{x,a,1}^{(r:j)}, \ldots, \vec X_{x,a, n_x^{(r:j)}}^{(r:j)}
    \end{align}
be the indicators random variables of including a defective item into pool $a$, when imagining the $n_x^{(r:j)}$ items for pool $a$ from item compartment $V[i+r]$ (excluding $x$ if necessary) being drawn one by one and without replacement. Since the pools draw their $\Gamma/s$ items independently from the different compartments and also independently of each other, these indicators are independent for different choices of $a$ and $r$. However, fixing $a$ and $r$, the $n_x^{(r:j)}$ indicator random variables describing the defective items in pool $a$ from compartment $V[i+r]$  are negatively associated (compare Definition \ref{def_NA}). Thus,
\begin{align*}
    \vec H_{x,a}^{(r:j)}\stackrel{d}{=} \sum_{j=0}^{s-1} \sum_{a \in \partial x \cap F[i+j]} \sum_{r=0}^{j} \sum_{t=1}^{n_x^{(r:j)}} \vec X_{x,a,t}^{(r:j)}
\end{align*}
can be regarded as a single sum over $\sum_{j=0}^{s-1}\vec \Delta_x[j] ((j+1)\Gamma/s - 1)$ negatively associated indicator random variables. We next return to our scaling. Setting
\begin{align*}
     \vec p_x^{(r:j)} = \frac{\vec K_x^{(r:j)}}{N_x^{(r:j)}},
\end{align*}
we have now arrived at the distributional identity
\begin{align}\label{eq_conc_2}
     \vec N_x  \stackrel{d}{=} \vec \mu_x \SIGMA_x + \sum_{j=0}^{s-1} \sum_{a \in \partial x \cap F[i+j]} \sum_{r=0}^{j} \sum_{t=1}^{n_x^{(r:j)}} \frac{ \vec X_{x,a,t}^{(r:j)} - \vec p_x^{(r:j)}}{\frac{\Delta}{s}(j+1)\sqrt{\frac{k}{m}}}.
\end{align}
In particular, (\ref{eq_conc_2}) shows that $\vec N_x \vert \cE_x$ is distributed as a constant term plus a sum of %$\sum_{j=0}^{s-1}\vec \Delta_x[j] ((j+1)\Gamma/s - 1)$ 
negatively associated, bounded and centered random variables, each of which has a very small variance %is bounded by $\frac{s}{\Delta}\sqrt{\frac{m}{k}}$ in absolute value 
thanks to our scaling. Because of the small variance of the individual summands, we will apply Bernstein's inequality for negatively associated random variables (Theorem \ref{Bernstein_Negative}) to show concentration of the sum. For this, let
\begin{align*}
    \vec v_x^2 :=  \sum_{j=0}^{s-1} \sum_{a \in \partial x \cap F[i+j]} \sum_{r=0}^{j} \sum_{t=1}^{n_x^{(r:j)}}\frac{  \Var(\vec X_{x,a,t}^{(r:j)}\vert \cE_x) }{(j+1)^2 \bc{\frac{\Delta}{s}}^2\frac{k}{m}}.
    \end{align*}
Then by Theorem \ref{Bernstein_Negative}, for any $T>0$, 
\begin{align} \label{eq_concentration1}
    \Pr\bc{\vec N_x - \vec \mu_x \SIGMA_x\geq T \Big \vert \cE_x} \leq \exp\bc{- \frac{T^2}{2 \vec v_x^2 + 2T\frac{s}{\Delta}\sqrt{\frac{m}{k}}/3}},
\end{align} 
as well as
\begin{align} \label{eq_concentration2}
\Pr\bc{\vec N_x - \vec \mu_x \SIGMA_x\leq - T \Big \vert \cE_x} \leq \exp\bc{- \frac{T^2}{2 \vec v_x^2 + 2T\frac{s}{\Delta}\sqrt{\frac{m}{k}}/3}}.
\end{align}
Since $\Pr\bc{\vec N_x \geq T_{\alpha} \big \vert \SIGMA_x=\tau} = \Erw\brk{\Pr\bc{\vec N_x \geq T_{\alpha} \big \vert \cE_x} \big \vert \SIGMA_x=\tau}$ for $\tau \in \{0,1\}$, \eqref{eq_concentration1} and \eqref{eq_concentration2} immediately yield
\begin{align}\label{eq_conc_3}
    \Pr\bc{\vec N_x \geq T_{\alpha} \big \vert \SIGMA_x=0} \leq \Erw\brk{\exp\bc{- \frac{T_\alpha^2}{2 \vec v_x^2 + 2T_\alpha \frac{s}{\Delta}\sqrt{\frac{m}{k}}/3}} \Bigg \vert \SIGMA_x=0} %+ \pr\bc{T \leq \sum_{j=0}^{s-1}\frac{\vec \Delta_x[j]i}{\frac{\Delta}{s}(j+1)\sqrt{\frac{k}{m}}}}.
\end{align}
and
\begin{align} \label{eq_conc_4}
   & \Pr\bc{\vec N_x \leq T_{\alpha} \big \vert \SIGMA_x=1}  %= \Erw\brk{\Pr\bc{\vec N_x \geq T \big \vert \cE_x} \big \vert \SIGMA_x=i}
   \leq \Erw\brk{\exp\bc{- \frac{\bc{T_{\alpha} - \vec \mu_x}^2}{2 \vec v_x^2 + \frac{2}{3}\bc{T_{\alpha} - \vec \mu_x}\sqrt{\frac{s^2 m}{\Delta^2 k}}}} \Bigg \vert \SIGMA_x=1}. %+ \pr\bc{\sum_{j=0}^{s-1}\frac{\vec \Delta_x[j]}{\frac{\Delta}{s}(j+1)\sqrt{\frac{k}{m}}}\geq T_{\alpha}}.
\end{align}
To further bound the right hand sides of \eqref{eq_conc_3} and \eqref{eq_conc_4}, we next upper bound $\vec v_x^2$. Since each $\vec X_{x,a,t}^{(r:j)}$ given $\cE_x$ is a Bernoulli random variable with success probability $\vec p_x^{(r:j)}$, we have $ \Var(\vec X_{x,a,t}^{(r:j)}\vert \cE_x) \leq  \vec p_x^{(r:j)}$ and therefore
\begin{align*}
    \vec v_x^2 %&= % \sum_{j=0}^{s-1} \sum_{a \in \partial x \cap F[i+j]} \sum_{r=0}^{j} \frac{1}{\frac{\Delta^2}{s^2}(j+1)^2\frac{k}{m}}\sum_{t=1}^{n_x^{(r:j)}}\vec p_x^{(r:j)}\bc{1 - \vec p_x^{(r:j)}} \\
    &\leq  \sum_{j=0}^{s-1} \vec \Delta_x[i+j] \sum_{r=0}^{j}\frac{1}{\frac{\Delta^2}{s^2}(j+1)^2\frac{k}{m}}\cdot\frac{\Gamma}{s} \cdot \frac{\vec k[i+r]}{|V[i+r]|}.
    \end{align*}
To deterministically bound the last expression, we define the event of having obtained a ``typical'' pooling scheme: Set
\begin{align*}
    \mathfrak P  &:=  \bigcap_{j=0}^{s-1} \cbc{\abs{\vec \Delta_x[i+j] - \frac{\Delta}{s}} \leq 2  \sqrt{\frac{\Delta}{s}} \log{n}} \cap \bigcap_{j=1}^{\ell+s-1} \cbc{\abs{\vec k[j] - \frac{k}{\ell}} \leq 2 \sqrt{\frac{k}{\ell}}\log{n}}.
    % \mathfrak K & := \bigcap_{j=1}^{\ell+s-1} \cbc{\abs{\vec k[j] - \frac{k}{\ell+s-1}} \leq 2 \ln n \sqrt{\frac{k}{\ell+s-1}}}.
\end{align*}
Due to Claims \ref{cor_conc_delta} and \ref{cor_conc_k}, the probability of $\mathfrak P$ is $1-o(n^{-2})$. On the other hand, on $\mathfrak P$, thanks to the exact identity $\Gamma m = \Delta n$,
\begin{align}\label{var_est}
    \vec v_x^2 &\leq \frac{\Gamma}{n} \cdot \frac{m}{\Delta} (1+o(1)) \cdot  \sum_{j=1}^s \frac{1}{j} =  (1+o(1)) \cdot  \sum_{j=1}^s \frac{1}{j} 
    %\frac{\Gamma}{s\bc{\frac{n}{\ell}-2}} \bc{\frac{\Delta}{s} + 2 \ln n \sqrt{\frac{\Delta}{s}}} \bc{\frac{k}{\ell+s-1} +  2 \ln n \sqrt{\frac{k}{\ell+s-1}}} \bc{\frac{s}{\Delta}}^2 \frac{m}{k} \sum_{j=1}^s \frac{1}{j} \nonumber\\
   % &\leq \bc{1 + 2 \ln n \sqrt{\frac{s}{\Delta}}} \bc{1 +  2 \ln n \sqrt{\frac{\ell+s-1}{k}}} \sum_{j=1}^s \frac{1}{j} \nonumber\\
    \leq (1+o(1)) \ln s.
    \end{align}
  Substituting in (\ref{var_est}) into (\ref{eq_conc_3}) and observing that $\frac{2}{3}T_{\alpha}\frac{s}{\Delta}\sqrt{\frac{m}{k}}=o(1)$, we obtain the type I error bound from the statement of the theorem:
 \begin{align}\label{eq_conc_5}
    \Pr\bc{\vec N_x \geq T_{\alpha} \big \vert \SIGMA_x=0} \leq s^{-(1+o(1)) \frac{\alpha^2}{2}\cdot \frac{m}{k}} + o(n^{-2}).
    %leq \exp\bc{-(1+o(1)) \frac{\alpha^2}{2}\cdot \frac{m}{k}\ln s} + o(n^{-2})
\end{align} 
Turning to the type II error bound, observe that on $\mathfrak P$,
\begin{align}\label{dev_est}
  \bc{T_{\alpha} - \vec \mu_x}^2  &= (1+o(1)) (1-\alpha)^2 \cdot \frac{m}{k}\ln^2 s.
\end{align}
Substituting in (\ref{dev_est}) into (\ref{eq_conc_4}), we obtain
\begin{equation*}
    \Pr\bc{\vec N_x \leq T_\alpha \big \vert \SIGMA_x=1}   \leq s^{-(1+o(1))\frac{(1-\alpha)^2}{2}\cdot \frac{m}{k}}+ o\bc{n^{-2}}. \qedhere
    %\leq \exp\bc{-(1+o(1))\frac{(1-\alpha)^2}{2}\cdot \frac{m}{k}\ln s }+ o\bc{n^{-2}}
\end{equation*}
\end{proof}

\subsection{Proof of Theorem \ref{thm_main_statement}} \label{SSec:Proof}

Given $\theta \in (0,1)$ and $\delta>0$, let $\vec G$ be the spatially coupled pooling scheme from Section \ref{SSec:SpCoup}, together with the $\ell$ additional pools $A_i$, $i=s, \ldots, \ell+s-1$. Set $\delta' = \delta/2$. We choose the parameters from \eqref{Eq:LS} to be
\begin{align} \label{Eq:final_par1}
    \eps_1= \theta\bc{\frac{3\delta'}{8(1+\delta')}}, \qquad 
    \eps_2 = \frac{\delta'}{4(1+\delta')} & \qquad \text{ and } \qquad  \eps_3=\frac{\delta'}{2(1+\delta')}
\end{align}
as well as a threshold parameter
\begin{align}\label{Eq:final_par2}
    \alpha = \frac{1}{1+\sqrt{\theta}}.
\end{align}
Notice that this choice of $\eps_1, \eps_2, \eps_3$ satisfies \eqref{Eq:Cond1}, \eqref{Eq:CondA} and \eqref{Eq:CondB}. 
Moreover, we fix the number $m$ of tests within the compartments to 
\begin{align*}
     2(1 + \delta')\cdot \frac{1+\sqrt{\theta}}{1-\sqrt{\theta}} \cdot \frac{1-\theta}{\theta} k = (1+\delta') \cdot c_{\theta}^{\star} \cdot \mdjackov^{\mathrm{inf}}.
\end{align*}
Then, since the number of additional tests $A_i, i \in [\ell+s-1]$, is $o(k)$, the total number of tests employed in the pooling scheme is bounded above by $(1+\delta) \cdot c_{\theta}^{\star} \cdot \mdjackov^{\mathrm{inf}}$. 
Let us define the bad event that there exists an individual whose exact neighbourhood sum does not correspond to its true state:
\begin{align}
    \mathfrak{B} &= \bigcup_{x \in V_{\mathrm{bulk}}} \left\{ \SIGMA_x=0, \vec N_x \geq T_{\alpha} \right\} \cup \left\{\SIGMA_x=1, \vec N_x \leq T_{\alpha} \right\}. 
\end{align}
We will first show that $\Pr\bc{\mathfrak{B}} = o(1)$. By the union bound,
\begin{align} \label{Eq:union_bound}
    \Pr\bc{\mathfrak{B}} %&= \Pr\bc{\bigcup_{x \in \SIGMA^{-1}\bc{0}} \left\{ \vec N_x \geq T_{\alpha} \right\} \cup \bigcup_{x \in \SIGMA^{-1}\bc{1}} \left\{ \vec N_x \leq T_{\alpha} \right\}}, \\
    \leq   n \cdot \Pr\bc{\vec N_x \geq T_\alpha \big \vert \SIGMA_x=0} + k \cdot \Pr\bc{\vec N_x \leq T_\alpha \big \vert \SIGMA_x=1}.
\end{align}
Next, by Lemma \ref{Lem_conc},
\begin{align*}
     n \cdot \Pr\bc{\vec N_x \geq T_\alpha \big \vert \SIGMA_x=0}  &\leq  n \cdot  s^{ -(1+o(1))\frac{2 c_\theta^\star \alpha^2 (1-\theta) \bc{1+\delta'} }{2 \theta  }} 
     = n^{1 - (1-\eps_3)(1+\delta') + o(1)} = o(1)
\end{align*}
by our choice of $\eps_3$. 
Secondly,
\begin{align*}
     k \cdot \Pr\bc{\vec N_x \leq T_\alpha \big \vert \SIGMA_x=1} &\leq n^{\theta} \cdot s^{-(1+o(1)) \frac{2 c_{\theta}^{\star} (1-\alpha)^2 (1-\theta) \bc{1+\delta'}}{2 \theta  }}
     = n^{\theta\bc{1- (1-\eps_3)(1+\delta')}+o(1)} = o(1),
\end{align*}
again by the choice of $\eps_3$.
 Therefore, $\Pr\bc{\mathfrak{B}} = o(1)$, and it is sufficient to restrict to $\fB^c$ in the following.   

On $\fB^c$, for all $x \in V_{\mathrm{bulk}}$, the non-normalised neighbourhood sum $\vec U_x^j$ of $x$ agrees with its algorithmic estimate $\cU_x^j$ at the time when the label of $x$ is about to be inferred. We will now argue that for $n$ sufficiently large, also $\vec N_x = \cN_x$. The only difference between $\vec N_x^j$ and $ \cN_x^j$, $j=0, \ldots, s-1$, arises from the different normalisation. Indeed, thanks to Corollary \ref{cor_cond_exp_pd},
\begin{align*}
    \mathcal M_x^j - \bc{\Erw\brk{\vec U_x^j \vert \cE_x} -  \vec \Delta_x[j]\SIGMA_x} &= \vec \Delta_x[j] \bc{\frac{\Gamma}{s} -1 } \frac{ \SIGMA_x}{|V[i]| -1},
\end{align*}
where we again assume that $x \in V[i]$. Thus
\begin{align*}
    \abs{ \vec N_x - \cN_x} = \sum_{j=0}^{s-1} \bc{ \vec N_x^j - \cN_x^j} = \sum_{j=0}^{s-1} \frac{\vec \Delta_x[j] \bc{\frac{\Gamma}{s} -1 } \frac{ \SIGMA_x}{|V[i]| -1}}{\frac{\Delta}{s}(j+1)\sqrt{\frac{k}{m}}}.
    \end{align*}
To further bound the last expression, set
\begin{align*}
    \mathfrak D  &:=  \bigcap_{i=s}^{\ell+s-1} \bigcap_{x \in V[i]}\bigcap_{j=0}^{s-1} \cbc{\abs{\vec \Delta_x[i+j] - \frac{\Delta}{s}} \leq 2 \ln n \sqrt{\frac{\Delta}{s}}}.
\end{align*}
On $\fB^c \cap \fD$, which is a w.h.p. event due to our initial computation and Claim \ref{cor_conc_delta}, we thus have
\begin{align*}
    \abs{ \vec N_x - \cN_x} = \sum_{j=0}^{s-1} \bc{ \vec N_x^j - \cN_x^j} = O\bc{\frac{\ell \Gamma}{n s}} = O\bc{n^{\theta(\eps_3-\eps_2)-\eps_1}} = o(1),
    \end{align*}
since our parameters are chosen to obey condition \eqref{Eq:CondA}. 
As the threshold $T_{\alpha}$ is of order $\ln s = \omega(1)$, on $\fB^c \cap \fD$, for large enough $n$, classification of every item according to $\cN_x$ will lead to the same classification as using $\vec N_x$, and therefore to correct classification. Consequently, w.h.p., \algoname\hspace{0.01cm} classifies all items correctly. 

Finally, we briefly argue why \algoname\hspace{0.01cm} is efficient. First, creating the artificial items along with their labels takes time $O\bc{(s-1)\lceil n/\ell\rceil}$. Second, partitioning all items into compartments takes time $O(n)$, and setting up the $\ell$ additional pools $A_i$ and evaluating their test results takes time $O(\ell n)$. Next, sampling the neighbourhood for all spatially coupled pools $a_i$ and evaluating their test outcomes takes time $O(mn)$, where the former can be done by labelling each item by an independent uniform random variable on $[0,1]$ associated to test $a_i$ and then assigning $a_i$ the $\Gamma/s$ items with lowest label in each possible compartment. Partitioning the tests into compartments takes time $O(m)$. This concludes setting up the pooling scheme. Finally, computation of all neighbourhood sums $\cN_x$ uses about $O(nm\Gamma s/\ell) = O(n^3)$ elementary operations. Therefore, both setting up the pooling scheme and running the thresholding algorithm is clearly doable in polynomial time. 
\qed

\subsubsection{Parameter Choice in the Pooling and Thresholding Schemes \eqref{Eq:final_par1}-\eqref{Eq:final_par2}} \label{SSSec:Opt}
To prove that our general combination of the spatial coupling with a thresholding approach classifies all items correctly, we employ a union bound over all items within a compartment, and then over all compartments (see \eqref{Eq:union_bound}). This bound solely depends on the concentration inequality from Lemma \ref{Lem_conc}, which is formulated in terms of general parameter choices, and we have therefore aimed to heuristically choose the parameters in a way that minimises $c_\theta$.

Specifically, to ensure correct classification of all items w.h.p., the exponents in \eqref{Eq:union_bound} need to be negative. From this, we obtain the two necessary conditions
\begin{align}
    1- c_\theta \alpha^2 (1-\theta)   (1+\delta)(1-\eps_3)& <0, \label{eq:inErr1a} \\
    \theta- c_\theta (1-\alpha)^2 (1-\theta)  (1+\delta)(1-\eps_3)& <0. \label{eq:inErr2a}
\end{align}
As pointed out, our aim now is to minimise $c_\theta$ subject to (\ref{eq:inErr1a}) and (\ref{eq:inErr2a}). We observe that both inequalities can be satisfied by choosing $\eps_3$ in dependence on $\delta$ small enough, if we can find $\alpha$ and $c_\theta$ that simultaneously satisfy
\begin{align}
    c_\theta \alpha^2 (1-\theta) & =1, \label{eq:inErr1a2} \\
   c_\theta (1-\alpha)^2 (1-\theta) & =\theta. \label{eq:inErr2a2}
\end{align}
Solving \eqref{eq:inErr1a2} and \eqref{eq:inErr2a2} for $c_{\theta}$ and $\alpha$ gives
    \begin{align*}
        \alpha_{\pm} =  & \frac{1\pm \sqrt{\theta}}{1-\theta}, \qquad c_{\theta, \pm} = \frac{1}{2\alpha-1}. 
        %\frac{2 - \sqrt{4 - 4 \bc{1-\theta}}}{2(1-\theta)} = \frac{1-\sqrt{\theta}}{1+\theta}= \frac{1}{1+\sqrt{\theta}}.
    \end{align*}
Since $\alpha_+ >1$ for all $\theta \in (0,1)$, we obtain
\[ \alpha_{\mathrm{opt}} = \frac{1-\sqrt{\theta}}{1+\theta} = \frac{1}{1+\sqrt{\theta}}, \qquad c_{\theta, \mathrm{opt}} =\frac{1+\sqrt{\theta}}{1-\sqrt{\theta}}.\]
%since $\bc{1+\delta}\bc{1-\eps_3}>1$. 
For this specification of $\alpha_{\mathrm{opt}}$, given $\delta$, we can now choose $\eps_3$ sufficiently small to ensure that both (\ref{eq:inErr1a}) and (\ref{eq:inErr2a}) are satisfied. 

What remains to be done is to specify choices of $\eps_1,\eps_2, \eps_3$ that satisfy the four conditions (\ref{Eq:Cond1}), (\ref{Eq:CondA}), (\ref{Eq:CondB}) as well as  %The yields the condition
\begin{align}
    \eps_3 < \frac{\delta}{1+\delta}, \label{Eq:Cond3a}
\end{align}
which we obtain from the condition $(1+\delta)(1-\eps_3)>1$ in the optimisation of $c_\theta$. There are many choices for these parameters, and it is straightforward to check that the choice specified in the proof of Theorem \ref{thm_main_statement} satisfies all four conditions.

As a final remark, we would like to point out that even allowing for $o(k/\ell)$ errors per compartment, and employing a second stage to correct these errors, does not appear to improve the constant $c_{\theta}$.

\section{Restricting to $\mathrm{d}=1$: Proof of Corollary \ref{thm_main_corollary}} \label{sec:d1}

We finally argue why it is sufficient to prove Theorem \ref{thm_main_statement} for the case $d=1$.
Suppose that for all $\delta, \theta \in (0,1)$, 
Theorem \ref{thm_main_statement} holds true. 

Indeed, let $d \geq 2, \delta >0$ and $\theta = \max\{\theta_i: i \in [d]\}$ as before be the prevalence of the dominant among the non-zero labels. Moreover, let $D$ be the number of $i \in [d]$ such that $\theta_i = \theta$. We then choose the spatially coupled pooling scheme for $d=1$ on $(1+\delta/2)2\frac{1-\theta}{\theta} \cdot \frac{1+\sqrt{\theta}}{1-\sqrt{\theta}}n^{\theta}$ pools from Theorem \ref{thm_main_statement}, along with $\sum_{i: \theta_i < \theta}(1+\delta)2\frac{1-\theta_i}{\theta_i} \cdot \frac{1+\sqrt{\theta_i}}{1-\sqrt{\theta_i}} k_i = o(n^{\theta})$ independent spatially coupled pooling schemes as in the proof of Theorem \ref{thm_main_statement}, each one corresponding to one of the parameters $\theta_i$ such that $\theta_i < \theta$. For $n$ sufficiently large, this yields less than $(1+\delta)2\frac{1-\theta}{\theta} \cdot \frac{1+\sqrt{\theta}}{1-\sqrt{\theta}}n^{\theta}$ pools in total.

We then proceed in $d$ rounds to infer one label after the other. 
In the first round, choose an arbitrary label $o$ with $\theta_o = \theta$. We then infer all items of label $i$ as follows. In any given item compartment $V[i]$, for $x \in V[i]$, \algoname\hspace{0.01cm} will be able to use all the histogram information obtained from the pools that $x$ is part of. \algoname\hspace{0.01cm}  will now, in contrast to the true histogram information, treat all items of label $p \notin \{0,o\}$ as having label $0$, and then compute the unexplained neighbourhood sums for $x$ based on this modified histogram with two labels, exactly as in the $d=1$ case. Also the normalisation uses exactly the same parameters as in the $d=1$ case.
This will lead to correct identification of all items with label $o$ w.h.p. 

If there is a second label $o'$ with $\theta_{o'} = \theta$, we then use the same pooling scheme and do a second round of thresholding. This time, all items of label $p \notin \{0,o'\}$ will be treated as having label $0$ in the computation of the unexplained neighbourhood sums. Again, the normalisation uses exactly the same parameters as in the $d=1$ case.
This will lead to correct identification of all items with label $o'$ w.h.p. Moreover, the use of the same pooling scheme in the various rounds does not pose a problem, since each round does not take into account the results from previous stages.

We do this until all items with a label $o$ such that $\theta_o = \theta$ are identified. For the remaining ones, we proceed exactly as before, but with a different pooling scheme as in Theorem \ref{thm_main_statement} as before. This is possible, as it asymptotically does not change the leading order of the number of pools. \qed

\section*{Acknowledgements}
We thank an unknown reviewer of a preliminary version for an excellent idea on how to simplify the seed.

\section*{Funding}
\noindent The first author was supported by DFG FOR 2975. The second and thirds author are supported in part by the NWO Gravitation project NETWORKS under grant no. 024.002.003. The third author was also supported by ERC-Grant 772606-PTRCSP.
{\includegraphics[scale=0.1]{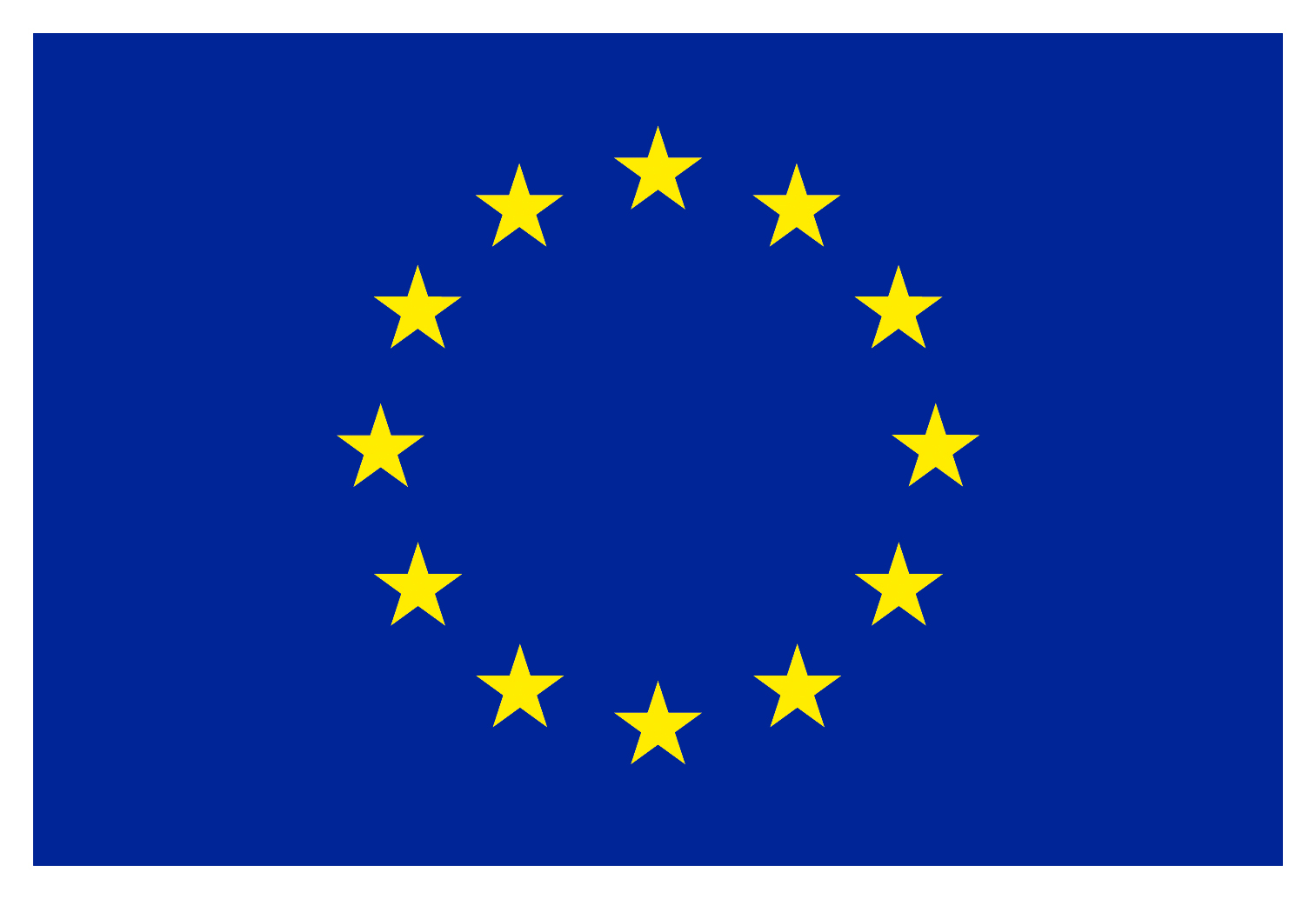}}

%%%%%%%%%%%%%%%%%%%%%%%%%%%%%%%%%%%%%%%%%%%%%%
%% References                               %%
%%%%%%%%%%%%%%%%%%%%%%%%%%%%%%%%%%%%%%%%%%%%%%

\printbibliography

%%%%%%%%%%%%%%%%%%%%%%%%%%%%%%%%%%%%%%%%%%%%%%
%% Multiple Appendixes:                     %%
%%%%%%%%%%%%%%%%%%%%%%%%%%%%%%%%%%%%%%%%%%%%%%
\begin{appendix}
\section{Notation} \label{appendix_notation}
\vspace{-10pt}
\renewcommand{\arraystretch}{0.4}
\begin{table}[H]
    \centering
    \begin{tabularx}{\linewidth}{c X}
        \hline
         \textbf{Variable} &\textbf{Description} \\
         \hline
         %%% Original Set Up
         $n$ & Number of items with unknown labels. \\ \hline 
         $k$ & Number of items with label $1$. \\ \hline
         $\theta$ & Growth parameter of the number of items with label $1$: $k=n^{\theta}$, $0<\theta<1$. \\ \hline    
         $m$ & Number of tests / measurements. We use $m = 2 c_{\theta} \bc{1 + \delta} (1-\theta)/\theta \cdot k$ throughout. \\ \hline
         $\mdjackov^{\mathrm{inf}}$ & Information-theoretically optimal number of tests / measurements: $\mdjackov^{\mathrm{inf}} = 2 (1-\theta)/\theta \cdot k$. \\ \hline
         $c_{\theta}$ & Number of tests divided by the information-theoretically optimal number of tests times a small buffer: $c_{\theta} = m /[(1+\delta)\mdjackov^{\mathrm{inf}}]$.
         %Number of measurements conducted. We set $m = 2 c_{\theta} \bc{1 + \delta} \bc{(1-\theta)/\theta} k $, where $c_\theta$ is a function of $\theta$. The information theoretic bound would correspond to $c_\theta=1$.
         \\ \hline
         $x_i$& Item $i$, $i = 1, \dots, n $.\\ \hline
         $V_{\text{bulk}}$ & Set of the $n$ items with unknown labels: $V_{\text{bulk}}=\{x_1, \ldots, x_n\}$. \\ \hline
         $a_i$& Test / measurement $i$, $i = 1, \dots, m$.\\ \hline
         $\SIGMA_i$& Label of item $x_i$. \\ \hline
         $\SIGMA$& Vector of item labels: $\SIGMA = (\SIGMA_1, \ldots, \SIGMA_n)$ (unknown to the algorithm). \\ \hline
         $\hat \SIGMA_i$ & Result of test $a_i$. \\ \hline
         $\hat \SIGMA$ & Vector of test results: $\hat \SIGMA= (\hat\SIGMA_1, \ldots, \hat\SIGMA_m)$. \\ \hline
         $\vec G$& SC pool design, represented by a random bipartite graph on the sets of items and pools. \\ \hline
         $\ell$ & Number of compartments consisting of items with unknown labels in the SC pool design.\\ \hline 
         $s$ & Number of compartments consisting of items with known labels in the SC pool design, plus one (sliding window).  \\ \hline
         $n'$& Number of auxiliary items. We set $n' = (s-1)\ceil{ n/\ell }$.\\ \hline 
         $\SIGMA'$& Vector of labels of the $n'$ auxiliary items %A vector of length $n'$ indicating the labels of the auxiliary items, sampled from all vectors with exactly $k n' n{-1}$ entries of label one and the rest zeroes. 
        (known to the algorithm). \\ \hline
        $V_{\text{seed}}$ & Set of the $n'$ auxiliary items with known labels. \\ \hline\hline
         %$\SIGMA^{-1}\bc{\{\beta\}}$& The set of all items $x_i$ such that the associated label $\SIGMA_i$ is equal to $\beta$. \\ \hline

    \end{tabularx}

    \label{tab:my_label}
    \caption{Summary of the main notation. Above, we abbreviate `spatial coupling' to `SC'.}
    \end{table}

    \begin{table}[!h]
    \centering
    \begin{tabularx}{\linewidth}{c X}
        \hline
         \textbf{Variable} &\textbf{Description} \\
         \hline
          %%% Testing
         
         %%% Spatial Coupling

         $\tilde \sigma$ & Algorithmic estimate of $\SIGMA$, initialised by the all-zero vector. \\ \hline
         %$V$ &  Set of all items including the actual items $x_1 \ldots x_n$ as well as the auxiliary items in the seed.\\ \hline
         $V[i]$& Item compartment $i$ in the SC pool design, $i = 1, \ldots, \ell+s-1$.
         %Consists of those $\sim n \ell^{-1}$ items in compartment $i$. 
         \\ \hline
         $F[i]$& Pool compartment $i$ in the SC pool design, $i = 1, \ldots, \ell+s-1$. %Consists of those $ \Theta \bc{m \ell^{-1}}$ measurements in compartment $i$ used in the algorithm.A measurement in compartment $F[j]$ consists only of items from compartments $V[j-s+1], \ldots, V[j]$. 
         \\ \hline
         $\vec k[i]$& Number of items with label one in item compartment $V[i]$, $i=1, \ldots, \ell-s-1$. %This is given through $\hat \SIGMA_{A_j}$.
         \\ \hline
          $A_i$& Pool containing all items of $V[i]$, $i=s,\dots,\ell+s-1$. %It reports the local structure of the ground-truth. 
         \\ \hline
         $\partial x$& Set of pools in the SC pool design that item $x$ is part of. %, represented by a random bipartite graph on the sets of items and pools. 
         \\ \hline
         $\partial a$& Set of items in pool $a$ in the SC pool design. \\ \hline
         % $F$ & Set of all pools $a_1, \ldots, a_m$ as well as the local measurements $A_s, \ldots, A_{s + \ell - 1}$\\ \hline %%% Isn't used
         %%% Algorithm
         $\vec \Delta_x[j]$ & The number of pools in $F[i+j-1$] that item $x \in V[i]$ is part of: 
         $\vec \Delta_x[j] = |\partial x \cap F[i+j-1]|$.\\ \hline
         $\vec \Delta_x$ & The number of pools in $F[i] \cup \ldots \cup F[i+s-1$] that item $x \in V[i]$ is part of: $\vec \Delta_x = \sum_{j=0}^{s-1}\vec \Delta_x[j]$. % degree of item $x$ with regard to $F[1], \ldots, F[\ell]$ in the underlying pooling graph $\G$, thus the number of measurements the item is part of in the algorithm.
         \\ \hline
         $\Delta$ & Expected number of pools in $F[i] \cup \ldots \cup F[i+s-1$] that item $x \in V[i]$ is part of: $\Delta = \Erw[\vec \Delta_x]$. \\ \hline
         $\Gamma$ &  Deterministic number of (distinct) items in each of the tests $a_i$, $i=1, \ldots, m$. \\ \hline
         $\vec U_x^j$ & Contribution of compartment $F[i+j]$ to the unexplained neighbourhood sum of item $x \in V[i]$. \\ \hline
         $\cU_x^j$ & Estimated contribution of compartment $F[i+j]$ to the unexplained neighbourhood sum of item $x \in V[i]$, based on the algorithmic estimate $\tilde \sigma$. \\ \hline
         $\vec N_x^j$ & Contribution of compartment $F[i+j]$ to the unexplained normalised and centered neighbourhood sum of item $x \in V[i]$.
         %into compartment $F[i+j]$, once with respect to the ground-truth and once with respect to the algorithmic estimate. This is a standardised variant of $\vec U_x^j, \cU_x^j$.
         \\ \hline
         $\cN_x^j$ & Estimated contribution of compartment $F[i+j]$ to the unexplained normalised and centered neighbourhood sum of item $x \in V[i]$, based on the algorithmic estimate $\tilde \sigma$.
         %Unexplained normalised and centralised neighbourhood sum of item $x \in V[i]$ into compartment $F[i+j]$, once with respect to the ground-truth and once with respect to the algorithmic estimate. This is a standardised variant of $\vec U_x^j, \cU_x^j$. 
         \\ \hline
         $\vec N_x$& Unexplained normalised and centered neighbourhood sum of item $x$ .%, once with respect to the ground-truth and once with respect to the algorithmic estimate.% It is the weighted sum over  $\vec N_x^j, \cN_x^j$.
         \\ \hline
         $\cN_x$& Estimated unexplained normalised and centered neighbourhood sum of item $x$, based on the algorithmic estimate $\tilde \sigma$. % It is the weighted sum over  $\vec N_x^j, \cN_x^j$. 
         \\ \hline
         $T_{\alpha}$ & Threshold that the algorithm uses to distinguish between $\SIGMA_x=0$ and $\SIGMA_x=1$: $T_{\alpha} \approx \alpha \cdot \Erw\brk{\vec N_x \vert \SIGMA_x=1}$, $\alpha \in (0,1)$.
         %$\vec N_x$ for the labels $0$ and $1$. The value $\alpha$ then determines where the threshold sits between the two expected values. %\\ \hline
         %, after the first algorithmic stage it becomes $\SIGMA$ \whp. 
         \\ \hline \hline
    \end{tabularx}
    
    \caption{Continued summary of the main notation. Above, we abbreviate `spatial coupling' to `SC'.}
\end{table}

\section{Concentration Inequalities} \label{Appendix_inequalities}

%Similarly, we define the \textit{multivariate hypergeometric distribution} MHyp$(B,K)$ with parameters $B=(B_1, \ldots, B_d) \in \NN_0^d$ and $K \leq \sum_{i=1}^d B_i$ through its point masses
%\begin{align*}
  %\text{MHyp}(B,K)\bc{\cbc{(b_1, \ldots, b_d)}} = \frac{\binom{B_1}{b_1} \cdot \ldots \cdot \binom{B_d}{b_d}}{\binom{\sum_{i=1}^d B_i}{K}}   \qquad \text{for} \quad j \in \cbc{(b_1, \ldots, b_d) \in \NN_0: \sum_{i=1}^d b_i = K}.
%\end{align*}

\begin{theorem}[Chernoff bound for the binomial distribution \cite{janson2011random}] 
\label{lem_chernoff}
Let $\vX \sim \Bin \bc{n, p}$. Then for any $\eps>0$, 
\begin{align*}
    & \Pr\bc{ \vX \geq (1 + \eps) \Erw \brk{\vX}} \leq \exp\bc{-\frac{\eps^2}{2 + 2\eps/3} \Erw\brk{\vX}} \qquad \text{and}  \\
    & \Pr\bc{\vX \leq (1 - \eps) \Erw\brk{\vX}} \leq \exp\bc{-\frac{\eps^2}{2} \Erw\brk{\vX}}.
\end{align*}
\end{theorem}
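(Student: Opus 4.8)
The plan is to use the classical Chernoff exponential-moment method, treating the two tail bounds in parallel. Writing $\mu = \Erw[\vX] = np$ and decomposing $\vX = \sum_{i=1}^n \vX_i$ into independent Bernoulli$(p)$ summands, the first step is to record the moment generating function
\[ \Erw\brk{\exp\bc{t\vX}} = \bc{1 - p + pe^t}^n \qquad (t \in \RR), \]
which factorises thanks to independence of the $\vX_i$.

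For the upper tail I would fix $t > 0$ and apply Markov's inequality to the nonnegative random variable $\exp(t\vX)$, giving $\Pr(\vX \geq (1+\eps)\mu) \leq \exp(-t(1+\eps)\mu)(1-p+pe^t)^n$. The elementary bound $1 + x \leq e^x$ applied with $x = p(e^t-1)$ controls the last factor by $\exp(\mu(e^t-1))$, so the right-hand side becomes $\exp(\mu(e^t - 1 - t(1+\eps)))$. Minimising the exponent over $t > 0$ is a one-variable calculus problem solved at $t = \ln(1+\eps)$, and substituting yields the sharp multiplicative form
\[ \Pr\bc{\vX \geq (1+\eps)\mu} \leq \bc{\frac{e^\eps}{(1+\eps)^{1+\eps}}}^\mu. \]
The lower tail is handled symmetrically: for $t > 0$ I would apply Markov to $\exp(-t\vX)$, use $\Erw[\exp(-t\vX)] = (1-p+pe^{-t})^n \leq \exp(\mu(e^{-t}-1))$, and optimise the exponent $\mu(e^{-t}-1+t(1-\eps))$ at $t = -\ln(1-\eps)$ (valid for $0 < \eps < 1$) to obtain $\Pr(\vX \leq (1-\eps)\mu) \leq (e^{-\eps}/(1-\eps)^{1-\eps})^\mu$.

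The last and genuinely analytic step, which I expect to be the main obstacle, is to convert these tight but unwieldy multiplicative bounds into the quoted Gaussian-type exponents. Concretely, the stated upper-tail estimate follows from the scalar inequality $(1+\eps)\ln(1+\eps) - \eps \geq \eps^2/(2 + 2\eps/3)$, and the lower-tail estimate from $(1-\eps)\ln(1-\eps) + \eps \geq \eps^2/2$, each holding for $\eps$ in the relevant range. I would establish each by setting the difference of the two sides as a function of $\eps$, checking that it and enough of its derivatives vanish at $\eps = 0$, and determining the sign of the remaining derivative. For the lower tail this is especially transparent via the series comparison $(1-\eps)\ln(1-\eps)+\eps = \sum_{j \geq 2} \eps^j/(j(j-1))$, whose terms are all positive and whose leading term is exactly $\eps^2/2$; for the upper tail the rational majorant $\eps^2/(2+2\eps/3)$ requires a slightly more careful monotonicity argument on the corresponding difference function. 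Since the statement is quoted verbatim from \cite{janson2011random}, one may alternatively just cite the relevant lemma there rather than reproduce these elementary but tedious estimates.
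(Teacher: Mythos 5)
The paper contains no proof of this statement: it is imported verbatim from \cite{janson2011random} as a standard tool (Appendix \ref{Appendix_inequalities}), so there is nothing internal to compare your argument against. Your exponential-moment derivation is the classical one and is correct: Markov applied to $\exp(t\vX)$, the bound $(1-p+p\eul^t)^n\leq\exp(\mu(\eul^t-1))$, optimisation at $t=\log(1+\eps)$ (resp.\ $t=-\log(1-\eps)$), and then the two scalar inequalities $(1+\eps)\log(1+\eps)-\eps\geq\eps^2/(2+2\eps/3)$ and $(1-\eps)\log(1-\eps)+\eps\geq\eps^2/2$. These are exactly the steps behind the cited result, and your series identity $(1-\eps)\log(1-\eps)+\eps=\sum_{j\geq2}\eps^j/(j(j-1))$ is indeed the cleanest route to the second inequality. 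The only loose end is the range of $\eps$ in the lower tail: the statement asserts the bound for all $\eps>0$, while your substitution $t=-\log(1-\eps)$ only makes sense for $\eps<1$. This costs nothing — for $\eps>1$ the event $\{\vX\leq(1-\eps)\Erw[\vX]\}$ is empty, and for $\eps=1$ it reduces to $\Pr(\vX=0)=(1-p)^n\leq\eul^{-np}\leq\eul^{-np/2}$ — but a one-line remark to that effect should be added. With that, the proof is complete; alternatively, as you note, one may simply cite \cite{janson2011random}, which is what the paper does.
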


\begin{theorem}[Chernoff bound for the hypergeometric distribution \cite{Janson99onconcentration}]\label{Hyp}
Let $\vX$ be a Hyp$(N,M,K)$-distributed random variable and let $t \geq 0$. Then
\begin{align*}
    & \Pr\bc{\vX - \Erw[\vX] \geq t} \leq \exp\bc{-\frac{t^2}{2(KM/N+t/3)}} \qquad \text{and} \\ & \Pr\bc{\vX - \Erw[\vX] \leq -t} \leq \exp\bc{-\frac{t^2}{2KM/N}}.
\end{align*}
\end{theorem}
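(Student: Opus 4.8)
The plan is to reduce the hypergeometric tail to the binomial tail via a moment-generating-function domination, and then invoke verbatim the Chernoff argument that already underlies Theorem~\ref{lem_chernoff}. Concretely, I would represent $\vX \sim \Hyp(N,M,K)$ as $\vX = \sum_{i=1}^{K}\vY_i$, where $\vY_1, \ldots, \vY_K$ are the success indicators of the $K$ draws made sequentially and without replacement from a population of $N$ items of which $M$ are successes. By the exchangeability of sampling without replacement, each marginal satisfies $\vY_i \sim \Be(M/N)$, so $\Erw[\vX] = KM/N$, which matches the centering used in the statement.

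The crux is the following comparison. The indicators $\vY_1, \ldots, \vY_K$ are negatively associated (the standard negative association of sampling without replacement; cf.\ Definition~\ref{def_NA}). Negative association implies that, for any family of functions that are \emph{all} non-decreasing or \emph{all} non-increasing, the expectation of the product is at most the product of the expectations. Applying this to $y \mapsto \eul^{\lambda y}$, which for each fixed $\lambda \in \RR$ is monotone of a single direction, yields the moment-generating-function domination
\begin{align*}
\Erw\brk{\eul^{\lambda \vX}} = \Erw\brk{\prod_{i=1}^{K}\eul^{\lambda \vY_i}} \leq \prod_{i=1}^{K}\Erw\brk{\eul^{\lambda \vY_i}} = \Erw\brk{\eul^{\lambda \vZ}},
\end{align*}
where $\vZ \sim \Bin(K, M/N)$ is a sum of $K$ i.i.d.\ $\Be(M/N)$ variables with the same marginals. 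Crucially, the inequality holds both for $\lambda > 0$ (needed for the upper tail) and for $\lambda < 0$ (needed for the lower tail).

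Given this domination, the two bounds follow exactly as in the binomial case. For the upper tail I would apply Markov's inequality to $\eul^{\lambda \vX}$ with $\lambda > 0$, bound $\Erw[\eul^{\lambda \vX}] \leq \bc{1 + (M/N)(\eul^{\lambda}-1)}^{K} \leq \exp\bc{(KM/N)(\eul^{\lambda}-1)}$, and optimise over $\lambda$; with the identifications $\Erw[\vX] = KM/N$ and $t = \eps\,\Erw[\vX]$ this reproduces precisely $\exp\bc{-t^2/\bc{2(KM/N + t/3)}}$. The lower tail is symmetric, taken with $\lambda < 0$, and gives $\exp\bc{-t^2/(2KM/N)}$. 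Since both are term-for-term the binomial bounds of Theorem~\ref{lem_chernoff} under this parameter identification, the whole conclusion is an immediate corollary once the MGF inequality is established.

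I expect the only genuine obstacle to be the negative-association step; everything downstream is the routine Chernoff optimisation already carried out for the binomial distribution. Should one prefer to bypass negative association, the same MGF inequality is delivered by Hoeffding's classical reduction, namely that $\Erw[f(\vX)] \leq \Erw[f(\vZ)]$ for every convex $f$ (take $f(y)=\eul^{\lambda y}$), which yields an identical proof.
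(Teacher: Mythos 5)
The paper does not actually prove this statement --- it is quoted directly from \cite{Janson99onconcentration} --- and your argument is precisely the standard derivation behind that citation: dominate the hypergeometric moment generating function by the binomial one and rerun the Chernoff computation of Theorem~\ref{lem_chernoff}, which is also consistent with the negative-association machinery (Definition~\ref{def_NA}, Theorem~\ref{Bernstein_Negative}) the paper already uses for Lemma~\ref{Lem_conc}. Your one delicate step, negative association of the draw-by-draw indicators $\vY_1,\ldots,\vY_K$, is sound: since the population values are $0/1$, the draw sequence is a uniform random permutation of $M$ ones and $N-M$ zeros, so the Joag-Dev--Proschan permutation-distribution theorem applies (and your Hoeffding convex-ordering fallback is classical and equally complete).
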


\vspace{-20pt}
\begin{theorem}[Bernstein inequality \cite{Chen2019}]\label{Bernstein}
Let $\vX_1, \ldots, \vX_n$ be independent random variables such that $\Erw\brk{\vX_i} = 0$ and $\abs{\vX_i} \leq z$ almost surely for all $i \in [n]$ and a constant $z>0$. Moreover, let $\sigma^2:=\frac{1}{n} \sum_{i=1}^n\Var\bc{\vX_i}$. Then for all $\eps > 0$, 
\begin{align*}
    \Pr\brk{\sum_{i=1}^n \vX_i \geq \eps n} \leq \mathrm{exp}\bc{-\frac{n \eps^2}{2\sigma^2 + 2z\eps/3}}.
\end{align*}
\end{theorem}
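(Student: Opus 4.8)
The plan is to run the classical exponential-moment (Cram\'er--Chernoff) method and to postpone the choice of the free parameter to the very end, so that it can be optimised to produce the stated denominator $2\sigma^2 + 2z\eps/3$ exactly rather than a cruder Gaussian-type tail. Fix $\lambda > 0$. Applying Markov's inequality to $\exp\bc{\lambda \sum_{i=1}^n \vX_i}$ and using independence to factorise the moment generating function, I would start from
\begin{align*}
    \Pr\brk{\sum_{i=1}^n \vX_i \geq \eps n} \leq \eul^{-\lambda \eps n} \prod_{i=1}^n \Erw\brk{\eul^{\lambda \vX_i}}.
\end{align*}
Everything then reduces to controlling a single moment generating function $\Erw\brk{\eul^{\lambda \vX_i}}$ and choosing $\lambda$ cleverly.

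The key step is to bound $\Erw\brk{\eul^{\lambda \vX_i}}$ using the two structural hypotheses $\Erw\brk{\vX_i}=0$ and $\abs{\vX_i}\leq z$. Expanding the exponential, the linear term vanishes, and I would estimate the higher moments by $\abs{\Erw\brk{\vX_i^r}} \leq z^{r-2}\Erw\brk{\vX_i^2} = z^{r-2}\Var(\vX_i)$ for every $r \geq 2$. Summing the resulting series gives the clean bound
\begin{align*}
    \Erw\brk{\eul^{\lambda \vX_i}} \leq 1 + \frac{\Var(\vX_i)}{z^2}\bc{\eul^{\lambda z} - 1 - \lambda z} \leq \exp\bc{\frac{\Var(\vX_i)}{z^2}\bc{\eul^{\lambda z} - 1 - \lambda z}},
\end{align*}
where the last step uses $1+u \leq \eul^u$. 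Taking the product over $i$ and recalling $\sum_i \Var(\vX_i) = n\sigma^2$, this yields
\begin{align*}
    \Pr\brk{\sum_{i=1}^n \vX_i \geq \eps n} \leq \exp\bc{-\lambda \eps n + \frac{n \sigma^2}{z^2}\bc{\eul^{\lambda z} - 1 - \lambda z}}.
\end{align*}

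To finish, I would invoke the elementary inequality $\eul^{u} - 1 - u \leq \frac{u^2/2}{1 - u/3}$, valid for $0 \leq u < 3$, with $u = \lambda z$, and then set $\lambda = \eps/\bc{\sigma^2 + z\eps/3}$. A short computation shows $1 - \lambda z/3 = \sigma^2/\bc{\sigma^2 + z\eps/3}$, so the two terms in the exponent collapse to $-n\eps^2/\bc{2\sigma^2 + 2z\eps/3}$, which is exactly the claimed bound. I expect the main obstacle to be securing the sharp constant $2z\eps/3$ in the denominator rather than a weaker sub-Gaussian tail: this forces both the use of the tight auxiliary inequality for $\eul^u - 1 - u$ (a purely quadratic bound would only be valid in the regime $\lambda z$ small) and the \emph{exact} optimiser $\lambda$, since a suboptimal choice would spoil the constant. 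The moment estimate $\abs{\Erw\brk{\vX_i^r}}\leq z^{r-2}\Var(\vX_i)$ is the other delicate point, as it is precisely what allows the full variance $\sigma^2$, and not merely the crude surrogate $z^2$, to govern the leading term.
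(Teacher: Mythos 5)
Your proposal is a correct and complete proof. Note that the paper itself does not prove this statement: Theorem~\ref{Bernstein} is quoted from the literature (\cite{Chen2019}) as a black-box concentration tool, so there is no internal argument to compare against. Your route is the standard Cram\'er--Chernoff derivation, and all the delicate points check out: the moment bound $\abs{\Erw\brk{\vX_i^r}} \leq z^{r-2}\Var(\vX_i)$ follows from $\abs{\vX_i}^r \leq z^{r-2}\vX_i^2$, the auxiliary inequality $\eul^u - 1 - u \leq \frac{u^2/2}{1-u/3}$ holds termwise since $r! \geq 2\cdot 3^{r-2}$ for $r \geq 2$, the optimiser $\lambda = \eps/(\sigma^2 + z\eps/3)$ satisfies $\lambda z < 3$ as required, and the algebra collapses the exponent to exactly $-n\eps^2/(2\sigma^2 + 2z\eps/3)$.
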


\vspace{-10pt}
\begin{definition}[Negative association] \label{def_NA} Let $\vX = (\vX_1, \ldots, \vX_d)$ be a random vector. Then the family of random variables $\vX_1, \ldots, \vX_d$ is said to be negatively associated if for every two disjoint index sets $I,J \subseteq [d]$ we have
\[ \Erw\brk{f(\vX_i: i \in I) g(\vX_j: j \in J)} \leq \Erw\brk{f(\vX_i: i \in I)} \Erw\brk{ g(\vX_j: j \in J)} \]
for all functions $f:\RR^{|I|} \to \RR$ and $g:\RR^{|J|} \to \mathbb{R}$ that are either both non-decreasing or both non-increasing.
\end{definition}
\vspace{-15pt}
\begin{theorem}[Corollary EC.4 of \cite{Chen2019}] \label{Bernstein_Negative}
Bernstein’s inequality (\Prop~\ref{Bernstein}) holds for negatively associated random variables.
\end{theorem}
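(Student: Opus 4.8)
The plan is to reproduce the classical exponential-moment derivation of Proposition~\ref{Bernstein} verbatim, isolating the single point at which independence is invoked and replacing it with the defining inequality of negative association. The standard argument starts, for $\lambda>0$, from the Chernoff step
\[
\Pr\brk{\sum_{i=1}^n \vX_i \geq \eps n} \leq \eul^{-\lambda \eps n}\, \Erw\brk{\exp\bc{\lambda \sum_{i=1}^n \vX_i}} = \eul^{-\lambda \eps n}\, \Erw\brk{\prod_{i=1}^n \eul^{\lambda \vX_i}},
\]
and for independent variables factorises the right-hand expectation as $\prod_i \Erw[\eul^{\lambda \vX_i}]$. Since this factorisation is the only place independence enters, it suffices to show that the correct inequality persists under negative association, after which every subsequent estimate is unchanged.

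First I would establish the key lemma: if $\vX_1,\ldots,\vX_n$ are negatively associated in the sense of Definition~\ref{def_NA}, then for every $\lambda>0$,
\[
\Erw\brk{\prod_{i=1}^n \eul^{\lambda \vX_i}} \leq \prod_{i=1}^n \Erw\brk{\eul^{\lambda \vX_i}}.
\]
The proof is by induction on $n$, the case $n=1$ being trivial. For the inductive step, observe that $x \mapsto \eul^{\lambda x}$ is non-decreasing for $\lambda>0$ and that a product of non-negative non-decreasing functions is again coordinatewise non-decreasing; hence $f(x_1,\ldots,x_{n-1}) := \prod_{i=1}^{n-1}\eul^{\lambda x_i}$ and $g(x_n) := \eul^{\lambda x_n}$ are both non-decreasing. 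Applying Definition~\ref{def_NA} with $I=\{1,\ldots,n-1\}$ and $J=\{n\}$ gives
\[
\Erw\brk{\prod_{i=1}^n \eul^{\lambda \vX_i}} \leq \Erw\brk{\prod_{i=1}^{n-1}\eul^{\lambda \vX_i}} \cdot \Erw\brk{\eul^{\lambda \vX_n}}.
\]
Because any subfamily of a negatively associated family is again negatively associated (read directly off Definition~\ref{def_NA} by letting $f,g$ depend only on the retained coordinates), the inductive hypothesis applied to $\vX_1,\ldots,\vX_{n-1}$ bounds the first factor by $\prod_{i=1}^{n-1}\Erw[\eul^{\lambda \vX_i}]$, completing the step.

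With this sub-multiplicativity in hand, the remainder of the proof needs no further probabilistic input. I would bound each single-variable transform by the usual Bernstein estimate: since $\Erw[\vX_i]=0$, $\abs{\vX_i}\leq z$ and $\abs{\Erw[\vX_i^r]} \leq \Var(\vX_i)\, z^{r-2}$ for $r\geq2$, a Taylor expansion of $\eul^{\lambda x}$ together with $r! \geq 2\cdot 3^{r-2}$ yields $\Erw[\eul^{\lambda \vX_i}] \leq \exp\bc{\lambda^2 \Var(\vX_i)/(2(1-\lambda z/3))}$ for $0<\lambda<3/z$. Multiplying over $i$, using $\sum_i \Var(\vX_i)=n\sigma^2$, and inserting into the Chernoff bound gives
\[
\Pr\brk{\sum_{i=1}^n \vX_i \geq \eps n} \leq \exp\bc{-\lambda \eps n + \frac{\lambda^2 n \sigma^2}{2(1-\lambda z/3)}};
\]
the choice $\lambda = \eps/(\sigma^2 + z\eps/3)$ then reproduces exactly the bound of Proposition~\ref{Bernstein}.

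The main obstacle is precisely the first step: the entire substance of the theorem is that the negative-association inequality happens to point in the favourable direction for the convex, monotone test functions $\eul^{\lambda x}$, so that the moment generating function of the sum is controlled by the product of the individual transforms. Everything afterwards is a line-by-line repetition of the independent case. The two points that require genuine care are that subfamilies inherit negative association (needed to run the induction) and that the monotonicity hypothesis of Definition~\ref{def_NA} is actually met by the partial products $\prod_{i<n}\eul^{\lambda x_i}$, which holds exactly because these functions are non-negative and coordinatewise non-decreasing.
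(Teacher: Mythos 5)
Your proposal is correct: the paper gives no proof of this statement (it is quoted as Corollary EC.4 of \cite{Chen2019}), and your argument is precisely the standard one behind that citation — independence enters the classical Bernstein derivation only through the factorisation of the moment generating function, and for $\lambda>0$ negative association applied to the non-negative, coordinatewise non-decreasing functions $\prod_{i<n}\eul^{\lambda x_i}$ and $\eul^{\lambda x_n}$ gives $\Erw\brk{\prod_{i=1}^n \eul^{\lambda \vX_i}} \leq \prod_{i=1}^n \Erw\brk{\eul^{\lambda \vX_i}}$ by induction, after which the single-variable estimate and the choice $\lambda=\eps/(\sigma^2+z\eps/3)$ (which indeed satisfies $\lambda z<3$) reproduce the stated bound exactly. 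The two points you flag as requiring care — that subfamilies inherit negative association (immediate from Definition~\ref{def_NA}, since disjoint index sets within the subfamily are disjoint in the full family) and that only the upper tail is claimed, so non-decreasing test functions suffice — are both handled correctly.
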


\end{appendix}

\end{document}